\title{Direct and inverse theorems of approximation of functions in
weighted Orlicz type spaces \\ with variable exponent}
\abstract{In weighted Orlicz type  spaces ${\mathcal S}_{_{\scriptstyle  \mathbf p,\,\mu}}$
with a variable summation  exponent,  the direct and inverse approximation
theorems are proved in terms of 
best approximations of functions and moduli of smoothness of fractional order. It is shown that the constant obtained in the inverse approximation theorem is in a certain sense the best. Some applications of the results are also proposed. In particular,  the constructive characteristics of functional classes defined by such moduli of smoothness  are given. Equivalence between moduli of smoothness and certain Peetre $K$-functionals is 
shown in the spaces
${\mathcal S}_{_{\scriptstyle  \mathbf p,\,\mu}}$.}
\keywords{best approximation, modulus of smoothness, direct  theorem,  inverse theorem, Orlicz type weighted spaces,  $K$-functionals}
\begin{document}

\section{Introduction} Let $C^r({\mathbb T})$ (${\mathbb T}:=[0,2\pi]$, $r\in {\mathbb N}_0:=\{0,1,\ldots\}$)
denote the space of $2\pi$-periodic $r$-times continuously differentiable functions with the usual  max-norm $\|f\|=\max_{x\in {\mathbb  T}} |f(x)|$. Let also $E_n(f)=\inf\limits_{\tau_n}\|f-\tau_n\|$ be
 the best approximation  of function $f\in C({\mathbb T})$ by trigonometric polynomials $\tau_n$ of degree $n$, $n\in {\mathbb N}_0$. The classical theorem of Jackson  (1912) says that {\it i) if  $f\in C^r({\mathbb T})$, then the following inequality holds:
 $
 E_n(f)\le K_r n^{-r}\omega (f^{(r)},n^{-1})$, $n=1,2,\ldots,$
 where  $ \omega (f,t):= \sup\limits_{|h|\le t}\|f(\cdot+h)-f(\cdot)\|$
 is the modulus of continuity of $f$.} This assertion   is a {\it direct approximation theorem}, which asserts that
smoothness of the function $f$ implies a quick decrease to zero of its error of
approximation by trigonometric polynomials.

On the other hand, the following {\it inverse theorem} of  Bernstein   (1912) with the opposite
implication  is well-known: {\it ii) if for some $0\!<\alpha\!<1$,
$E_n(f)\!\le K_r n^{-r-\alpha}$, $n=1,2,\ldots$,
then 
$\omega (f^{(r)},t)={\mathcal O}(t^\alpha)$, $t\to 0+$.} In ideal cases, these two theorems
correspond to each other. For example,
it follows from i) and ii) that  the relation $E_n(f)={\mathcal O}(n^{-\alpha})$, $0<\alpha<1$, is equivalent
to the condition $\omega (f,t)={\mathcal O}(t^\alpha)$, $t\to 0+$.
Such theorems have been of great interest to researchers and constitute the classics of modern approximation  theory (see, for example the monographs \cite{Akhiezer_M1947}, \cite{Butzer_Nessel_M1971},
\cite{A_Timan_M1960}, \cite{DeVore_Lorentz_M1993}, \cite{Dzyadyk_Shevchuk_M2008},
\cite{M_Timan_M2009}).



In recent decades,   the topics related to  the direct and inverse approximation theorems  have been
actively investigated in  the Orlicz spaces and in the Lebesgue spaces with a variable  exponent. In particular, for the Lebesgue functional spaces with variable exponent, similar results are contained in the papers of Guven and Israfilov \cite{Guven-Israfilov-2010}, Akg\"{u}n \cite{Akgun-umg.2011.1}, Akg\"{u}n and Kokilashvili \cite{Akgun-Kokilashvili-gmg-2011-3, Akgun-Kokilashvili-Jour-math-scien-2012-4},  Chaichenko \cite{Chaichenko_umg_2012_engl}, Jafarov \cite{Jafarov-2017-DM, Jafarov-2018-JCVEE} and others.
The latest results related to  the Lebesgue spaces with variable exponent, and their applications are described in the monograph \cite{Diening_and_other_book_2011}. We also note
the papers by Nekvinda \cite{Nekvinda-JFSA-1-2007, Nekvinda-MIA-1-2007} devoted to the investigations of  the discrete weighted Lebesgue spaces with a variable exponent.


In 2000, Stepanets \cite{Stepanets_2001} considered the spaces  $S^p$ of $2\pi$-periodic  Lebesgue summable functions $f$ ($f\in L$) with the finite norm
\[
    \|f\|_{_{\scriptstyle S^p}}:=\|\{\widehat f({k})\}_{{k}\in\mathbb  Z}
    \|_{_{\scriptstyle l_p({\mathbb Z})}}=\Big(\sum_{{ k}\in\mathbb  Z}|\widehat f({k})|^p\Big)^{1/p},
\]
where
$\widehat{f}(k):={[f]}\widehat{\ \ }(k)=(2\pi)^{-1}\int_0^{2\pi}f(t) \mathrm{e}^{- \mathrm{i}kt}{\rm d}t$,
$k\in\mathbb Z$, are the Fourier coefficients of the function $f$,
and  investigated some approximation characteristics of these spaces,
including in the context of direct and inverse theorems.
Stepanets and Serdyuk \cite{Stepanets_Serdyuk_2002} introduced the notion of $k$th modulus of smoothness in $S^p$ and established   the  direct and inverse theorems on approximation in terms of these moduli of
smoothness  and the best approximations of  functions. Also this topic was
investigated actively in \cite{Sterlin_1972}, \cite{Serdyuk_2003}, \cite{Vakarchuk_2004},  \cite[Ch.~9]{Stepanets_M2005},  \cite[Ch.~3]{M_Timan_M2009}  and others.

In the papers  \cite{Shydlich_Chaichenko_2004}, \cite{Shidlich_Chaichenko_2005} some results for the spaces $S^p$ were extended to the Orlicz spaces  $l_{M}$ and to the spaces $l_{\mathbf p}$ with a variable summation  exponent. In particular, in these spaces,  the authors found  the exact values of the best approximations
and Kolmogorov's widths of certain sets of images of the diagonal operators. The purpose of this paper is to combine the above mentioned studies and  prove  the  direct and inverse approximation theorems  in the weighted spaces ${\mathcal S}_{_{\scriptstyle  \mathbf p,\,\mu}}$  of the Orlicz type with a variable summation  exponent.


\section{ Preliminaries.}  Let ${\mathbf p}=\{p_k\}_{k=-\infty}^\infty$ be a sequence of positive numbers such that
\begin{equation}\label{l_p.2001}
1\le p_k\le K,\quad k=0,\pm 1, \pm 2,\ldots,
\end{equation}
where $K$ is a positive number, and ${\mathbf \mu}=\{\mu_k\}_{k=-\infty}^\infty$ be a sequence of nonnegative  numbers. Let ${\mathcal S}_{_{\scriptstyle  \mathbf p,\,\mu}}$ be the space of all functions $f\in L$ such that the following quantity (which is also called the Luxemburg norm of $f$) is finite:
\begin{equation}\label{S_M.1}
\|{f}\|_{_{\scriptstyle \mathbf p,\,\mu}}:=
    \|\{\widehat{f}(k)\}_{k\in {\mathbb Z}}\|_{_{\scriptstyle l_{\mathbf p,\,\mu}({\mathbb Z})}}
    =
    \inf\bigg\{a>0:\  \sum\limits_{k\in\mathbb Z}   \mu_k|{\widehat{f}(k)}/{a} |^{p_k}\le 1\bigg\}.
\end{equation}
The functions $f\in L$ and $g\in L$ are equivalent in 
${\mathcal S}_{_{\scriptstyle  \mathbf p,\,\mu}}$, when 
$\|f-g\|_{_{\scriptstyle  \mathbf p,\,\mu}}=0.$

If the sequence ${\mathbf p}=\{p_k\}_{k=-\infty}^\infty$ satisfies condition  (\ref{l_p.2001}), then
\[
{\mathcal S}_{_{\scriptstyle  \mathbf p,\,\mu}}=\bigg\{f\in L\ :\quad \sum\limits_{k\in\mathbb Z}   \mu_k|{\widehat{f}(k)}|^{p_k}<\infty\bigg\}.
\]

The spaces ${\mathcal S}_{_{\scriptstyle  \mathbf p,\,\mu}}$ defined in this way are the  Banach spaces.
In case when $p_k=p$ and $\mu_k= 1$,  ${k\in\mathbb Z}$,  $p\ge 1$, they coincide with the above-defined spaces $S^p$.

Let ${\mathcal T}_{n}$, $n=0,1,\ldots$, be the set of all trigonometric polynomials
$\tau_{n}(x):=\sum_{|k|\le n}  c_{k}\mathrm{e}^{\mathrm{i}kx}$ of the order $n$, where $c_{ k}$
are arbitrary complex numbers. For any function $f\in {\mathcal S}_{_{\scriptstyle  \mathbf p,\,\mu}}$, we denote
by
\[
    E_n (f)_{_{\scriptstyle \mathbf p,\,\mu}}:=
    \inf\limits_{\tau_{n-1}\in {\mathcal T}_{n-1} }\|f-\tau_{n-1}\|_{_{\scriptstyle \mathbf p,\,\mu}}=
    \inf\limits_{c_{ k}\in {\mathbb C}}\Big\|f-\sum_{|k|\le n-1}
    c_{k}\mathrm{e}^{\mathrm{i}k\cdot}\Big\|_{_{\scriptstyle \mathbf p,\,\mu}}
\]
the best approximation of $f$ by the  trigonometric polynomials $\tau_{n-1}\in {\mathcal T}_{n-1}$ in the space ${\mathcal S}_{_{\scriptstyle  \mathbf p,\,\mu}}$.

For a fixed $a>0$ and arbitrary numbers  $c_k\in {\mathbb C}$,
\[%
    \sum\limits_{|k|\le n-1} \mu_k\Big({|\widehat{f}(k)-c_k|}/{a}\Big)^{p_k}+ \sum\limits_{|k|\ge n} \mu_k\Big({|\widehat{f}(k)|}/{a}\Big)^{p_k}\ge \sum\limits_{|k|\ge n} \mu_k\Big({|\widehat{f}(k)|}/{a}\Big)^{p_k},
\]%
therefore, for any function $f\in {\mathcal S}_{_{\scriptstyle  \mathbf p,\,\mu}}$  we have
\begin{eqnarray}\nonumber
E_n (f)_{_{\scriptstyle \mathbf p,\,\mu}}&=&\|f-{S}_{n-1}({f})\|_{_{\scriptstyle \mathbf p,\,\mu}}\\
\label{S_M.4}
&=&\inf\bigg\{a>0: \sum\limits_{|k|\ge n} \mu_k\Big({|\widehat{f}(k)|}/{a}\Big)^{p_k}\le 1\bigg\}.
\end{eqnarray}

where
$S_{n-1}(f,x)= \sum _{|k|\le n-1}\widehat{f}(k) {\mathrm{e}^{\mathrm{i}kx}}$
is the Fourier sum of the function $f$.


\section{ Differences and moduli of smoothness of fractional order.}

Similarly to \cite{Butzer_Westphal_1975},  we define the (right) difference of $f\in L$ of the fractional order  $\alpha>0$
with respect to the increment $h\in {\mathbb R}$ by
\begin{equation}\label{S_M.6}
    \Delta_h^\alpha f({ x}):=\sum\limits_{j=0}^\infty (-1)^j {\alpha \choose j} f({ x}-jh),\ \
\end{equation}
where ${\alpha \choose j}=\frac {\alpha(\alpha-1)\cdot\ldots\cdot(\alpha-j+1)}{j!},~ j \in \mathbb{N}$, ${\alpha \choose 0}:=1$, and assemble some basic properties of the fractional differences.

\begin{lemma}\label{Lemma_1} Assume that $f\in {\mathcal S}_{_{\scriptstyle  \mathbf p,\,\mu}}$, $\alpha, \beta>0$, $x,h\in {\mathbb R}$. Then

{\rm (i)} $\|\Delta_h^\alpha f\|_{_{\scriptstyle \mathbf p,\,\mu}}\le K(\alpha)\|f\|_{_{\scriptstyle \mathbf p,\,\mu}}$,  where \ \ $K(\alpha):=\sum_{j=0}^\infty \Big|{\alpha \choose j}\Big|\le 2^{\{\alpha\}}$,

~~~~$\{\alpha\}:=\inf\{k\in {\mathbb N}: k\ge \alpha\}$.

{\rm (ii)} ${[\Delta_h^\alpha f]}\widehat {\ \ }(k)=(1-\mathrm{e}^{-\mathrm{i}kh})^\alpha \widehat{f}(k)$, $ k\in\mathbb Z$.

{\rm (iii)} $(\Delta_h^\alpha (\Delta_h^\beta f))(x)=\Delta_h^{\alpha+\beta} f(x)$ (a.\,e.).

{\rm (iv)} $\|\Delta_h^{\alpha+\beta} f\|_{_{\scriptstyle \mathbf p,\,\mu}}\le 2^{\{\beta\}} \|\Delta_h^{\alpha} f\|_{_{\scriptstyle \mathbf p,\,\mu}} $.

{\rm (v)} $\lim\limits_{h\to 0}\|\Delta_h^{\alpha} f\|_{_{\scriptstyle \mathbf p,\,\mu}}=0$.
\end{lemma}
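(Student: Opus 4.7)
My plan is to handle the five items in order, using the Fourier-multiplier identity of (ii) as the workhorse for everything else.

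For (i), I would apply the triangle inequality for the Luxemburg norm termwise to the series in (\ref{S_M.6}). Since $\widehat{f(\cdot-jh)}(k)=\mathrm{e}^{-\mathrm{i}kjh}\widehat f(k)$, translations preserve the moduli of all Fourier coefficients, so $\|f(\cdot-jh)\|_{_{\scriptstyle\mathbf p,\,\mu}}=\|f\|_{_{\scriptstyle\mathbf p,\,\mu}}$ for every $j$, and one obtains $\|\Delta_h^\alpha f\|_{_{\scriptstyle\mathbf p,\,\mu}}\le K(\alpha)\|f\|_{_{\scriptstyle\mathbf p,\,\mu}}$. Finiteness of $K(\alpha)$ follows from the Stirling asymptotic $\binom{\alpha}{j}=O(j^{-\alpha-1})$, and the sharper bound $K(\alpha)\le 2^{\{\alpha\}}$ can be extracted from the factorisation $(1-z)^\alpha=(1-z)^{\{\alpha\}}(1-z)^{\alpha-\{\alpha\}}$ on $|z|\le 1$: the second factor has only non-negative Taylor coefficients, since $\alpha-\{\alpha\}\in(-1,0]$, and a Vandermonde-type convolution against the polynomial $(1-z)^{\{\alpha\}}$ reduces the sum of absolute values to the integer binomial identity $\sum_{j=0}^{\{\alpha\}}\binom{\{\alpha\}}{j}=2^{\{\alpha\}}$. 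Part (ii) is then a direct Fourier computation: absolute convergence furnished by $K(\alpha)<\infty$ justifies interchanging the summation in (\ref{S_M.6}) with the Fourier integral, so $\widehat{\Delta_h^\alpha f}(k)=\widehat f(k)\sum_{j}\binom{\alpha}{j}(-\mathrm{e}^{-\mathrm{i}kh})^j=(1-\mathrm{e}^{-\mathrm{i}kh})^\alpha\widehat f(k)$.

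Items (iii) and (iv) are then almost mechanical. For (iii), (ii) shows that both $\Delta_h^\alpha(\Delta_h^\beta f)$ and $\Delta_h^{\alpha+\beta}f$ have Fourier coefficients $(1-\mathrm{e}^{-\mathrm{i}kh})^{\alpha+\beta}\widehat f(k)$; two $L$-functions with identical Fourier series coincide almost everywhere, so the required equality holds. For (iv), I would apply (i) (with $\alpha$ replaced by $\beta$) to $\Delta_h^\alpha f$, which lies in $\mathcal S_{_{\scriptstyle\mathbf p,\,\mu}}$ by (i), and combine the resulting estimate with (iii).

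The main obstacle is (v). The plan is a dominated-convergence argument on the modular followed by its conversion into a Luxemburg norm estimate. Set $g_h(k):=(1-\mathrm{e}^{-\mathrm{i}kh})^\alpha\widehat f(k)$. Then $|g_h(k)|^{p_k}\to 0$ pointwise in $k$ as $h\to 0$ and is dominated by $2^{\alpha K}|\widehat f(k)|^{p_k}$, which is $\mu$-summable because $f\in\mathcal S_{_{\scriptstyle\mathbf p,\,\mu}}$. Hence the modular $\varrho_h:=\sum_{k\in\mathbb Z}\mu_k|g_h(k)|^{p_k}$ tends to $0$. Converting $\varrho_h\to 0$ into a norm estimate is the subtle point under a merely bounded variable exponent: once $\varrho_h\le 1$, set $a_h:=\varrho_h^{1/K}\le 1$; since $1\le p_k\le K$, one has $a_h^{p_k}\ge a_h^K=\varrho_h$, so $\sum_{k}\mu_k(|g_h(k)|/a_h)^{p_k}\le \varrho_h/\varrho_h=1$ and therefore $\|\Delta_h^\alpha f\|_{_{\scriptstyle\mathbf p,\,\mu}}\le \varrho_h^{1/K}\to 0$. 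The step I expect to demand the most care is this final modular-to-norm passage, since it must hold uniformly across every admissible $p_k$ without any Young or Hölder duality available in this variable-exponent setting.
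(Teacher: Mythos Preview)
Your treatment of (ii)--(iv) coincides with the paper's: compute the Fourier coefficients of the difference, deduce the semigroup identity from equality of Fourier series, and combine (i) with (iii) for (iv). For (i) the paper works directly inside the modular rather than invoking the triangle inequality on the Luxemburg norm, but the two are equivalent here. One minor slip: your justification of $K(\alpha)\le 2^{\{\alpha\}}$ via the factorisation $(1-z)^{\{\alpha\}}(1-z)^{\alpha-\{\alpha\}}$ does not work as stated, because the second factor (exponent in $(-1,0)$) has non-negative Taylor coefficients whose sum \emph{diverges}, so the convolution bound is vacuous. The paper simply takes this inequality as known; a correct argument splits $\alpha$ into $\{\alpha\}$ summands each in $(0,1]$, for which $K(\cdot)=2$ exactly, and then multiplies.

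The substantive difference is in (v). The paper proceeds by density: first the explicit polynomial estimate $\|\Delta_h^\alpha\tau_n\|_{\mathbf p,\mu}\le|nh|^\alpha\|\tau_n\|_{\mathbf p,\mu}$ (obtained from $|\sin t|\le|t|$ inside the modular), then approximation of $f$ by its Fourier sum $S_n$ together with (i) to control $\|\Delta_h^\alpha(f-S_n)\|_{\mathbf p,\mu}$. Your dominated-convergence argument on the modular, followed by the modular-to-norm conversion $\|\cdot\|_{\mathbf p,\mu}\le\varrho^{1/K}$ once $\varrho\le 1$, is correct and more direct; it leans essentially on the upper bound $p_k\le K$, which the paper's route uses only implicitly. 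What the paper's approach buys is the quantitative by-product $\|\Delta_h^\alpha\tau_n\|_{\mathbf p,\mu}\le|nh|^\alpha\|\tau_n\|_{\mathbf p,\mu}$, which is recycled later (cf.\ the proof of Lemma~\ref{Lemma_4}); your route gives (v) cleanly but would require that estimate to be redone separately when it is needed.
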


The proof of Lemma \ref{Lemma_1} and other auxiliary statements of the paper will be given in Section~\ref{Auxiliary statements}.


Based on 
(\ref{S_M.6}), the modulus of smoothness of $f\in {\mathcal S}_{_{\scriptstyle  \mathbf p,\,\mu}}$ of the index $\alpha>0$
is defined by
\[
    \omega_\alpha(f,\delta)_{_{\scriptstyle \mathbf p,\,\mu}}:=
    \sup\limits_{|h|\le \delta}\|\Delta_h^\alpha f\|_{_{\scriptstyle \mathbf p,\,\mu}}.
\]
Using the standard arguments, it can be shown that the functions
$\omega_\alpha(f,\delta)_{_{\scriptstyle \mathbf p,\,\mu}}$
possess all the basic properties of ordinary moduli of smoothness. Before formulating them,
we give the definition of the $\psi$-derivative of a function.


Let  $\psi=\{\psi_k\}_{k=-\infty}^\infty$ be an arbitrary sequence of complex numbers, $\psi_k\not=0$, $k\in {\mathbb Z}$. If for a given function $f\in L$ with the Fourier series of the form
$S[f](x)=\sum_{k\in {\mathbb Z}}\widehat {f}(k)\mathrm{e}^{\mathrm{i}k x},$ the series
$\sum_{k\in {\mathbb Z}\setminus\{0\}}\widehat {f}(k)\mathrm{e}^{\mathrm{i}k x}/{\psi_k} $
 is the  Fourier series of a certain function $g\in L$, then  $g$ is called
(see, for example, \cite[Ch.~9]{Stepanets_M2005}) $\psi$-derivative of the function $f$ and is denoted as $g:=f^{\psi}$. It is clear that the Fourier coefficients of functions  $f$ and  $f^{\psi }$  are related by equality
\begin{equation}\label{Fourier coeff}
    \widehat  f(k)=\psi_k\widehat  f^{\psi }(k), \ \ k\in {\mathbb Z}\setminus\{0\}.
\end{equation}
In the case $\psi_k=|k|^{-r}$, $r>0$, $k\in {\mathbb Z}\setminus\{0\}$, we use the notation $f^{\psi}=:f^{(r)}$.


\begin{lemma}\label{Lemma_2}  Assume that  $f, g\in {\mathcal S}_{_{\scriptstyle  \mathbf p,\,\mu}}$, $\alpha\ge \beta>0 $ and   $\delta,\delta_1,\delta_2>0$.  Then

{\rm (i)} $\omega_\alpha(f,\delta)_{_{\scriptstyle \mathbf p,\,\mu}}$
is a non-negative increasing continuous  function of $\delta$ on  $(0,\infty)$

~~~~such that $\lim\limits_{\delta\to 0+}\! \omega_\alpha(f,\delta)_{_{\scriptstyle \mathbf p,\,\mu}}\!\!=0$.

{\rm (ii)} $\omega_\alpha(f,\delta)_{_{\scriptstyle \mathbf p,\,\mu}}\le 2^{\{\alpha-\beta\}} \omega_\beta(f,\delta)_{_{\scriptstyle \mathbf p,\,\mu}}$.

{\rm (iii)} $\omega_\alpha(f+g,\delta)_{_{\scriptstyle \mathbf p,\,\mu}}\le \omega_\alpha(f,\delta)_{_{\scriptstyle \mathbf p,\,\mu}}+\omega_\alpha(g,\delta)_{_{\scriptstyle \mathbf p,\,\mu}}$.

{\rm (iv)}  $\omega _1(f,\delta_1+\delta_2)_{_{\scriptstyle \mathbf p,\,\mu}}\le
 \omega _1(f,\delta_1)_{_{\scriptstyle \mathbf p,\,\mu}}+\omega _1(f,\delta_2)_{_{\scriptstyle \mathbf p,\,\mu}}$.

 {\rm (v)}  $\omega _\alpha(f,\delta)_{_{\scriptstyle \mathbf p,\,\mu}}\le 2^{\{\alpha\}}\|f\|_{_{\scriptstyle \mathbf p,\,\mu}}$.

  {\rm (vi) }  if there exists 
  $f^{(\beta)} \in {\mathcal S}_{_{\scriptstyle  \mathbf p,\,\mu}}$, then
  $\omega _{\alpha}(f,\delta)_{_{\scriptstyle \mathbf p,\,\mu}} \le  \delta^{\beta}\omega _{\alpha-\beta}(f^{(\beta)},\delta)_{_{\scriptstyle \mathbf p,\,\mu}}$.

  {\rm (vii) }
\ $\omega_\alpha(f, p\delta)_{_{\scriptstyle \mathbf p,\,\mu}} \le p^\alpha \omega_\alpha(f, \delta)_{_{\scriptstyle \mathbf p,\,\mu}}$ \quad $({\alpha\in \mathbb{N}},\ \ {p \in \mathbb{N}}).$

{\rm (viii) }
\ $\omega_\alpha(f, \eta )_{_{\scriptstyle \mathbf p,\,\mu}} \le \delta^{-\alpha} (\delta+\eta)^\alpha  \omega_\alpha(f, \delta)_{_{\scriptstyle \mathbf p,\,\mu}}$ \quad
$({\alpha\in \mathbb{N}})$.

\end{lemma}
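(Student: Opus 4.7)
My plan is to exploit Lemma~\ref{Lemma_1} and two structural features of $\|\cdot\|_{_{\scriptstyle \mathbf p,\,\mu}}$ throughout: the norm is translation invariant, because $[f(\cdot-h)]\widehat{\ \ }(k)=\mathrm{e}^{-\mathrm{i}kh}\widehat f(k)$ and $|\mathrm{e}^{-\mathrm{i}kh}|=1$ leaves every term $\mu_k|\widehat f(k)/a|^{p_k}$ of the defining sum unchanged; and it satisfies the triangle inequality. Taking the supremum over $|h|\le\delta$ will convert pointwise-in-$h$ assertions about $\Delta_h^\alpha$ into assertions about $\omega_\alpha(f,\cdot)_{_{\scriptstyle \mathbf p,\,\mu}}$. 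With this in mind, non-negativity and monotonicity in (i) are immediate from the definition, while $\omega_\alpha(f,\delta)_{_{\scriptstyle \mathbf p,\,\mu}}\to 0$ as $\delta\to 0+$ is exactly Lemma~\ref{Lemma_1}(v); continuity on $(0,\infty)$ then follows by a standard argument from monotonicity together with this limit (reducing to the continuity of $h\mapsto\|\Delta_h^\alpha f\|_{_{\scriptstyle \mathbf p,\,\mu}}$). Items (ii), (iii), and (v) reduce directly to Lemma~\ref{Lemma_1}(iv), the linearity of $\Delta_h^\alpha$ together with the triangle inequality, and Lemma~\ref{Lemma_1}(i), respectively. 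For (iv) I would split $h=h_1+h_2$ with $|h_i|\le\delta_i$ and use the identity $\Delta_h^1 f(x)=\Delta_{h_1}^1 f(x)+(\Delta_{h_2}^1 f)(x-h_1)$, applying translation invariance to the second summand.

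The only place where the variable-exponent nature of the norm must be examined carefully is (vi). Here I would combine Lemma~\ref{Lemma_1}(ii) with the $\psi$-derivative identity (\ref{Fourier coeff}) for $\psi_k=|k|^{-\beta}$ to write, for $k\ne 0$,
\[
[\Delta_h^\alpha f]\widehat{\ \ }(k)=\Bigl(\tfrac{1-\mathrm{e}^{-\mathrm{i}kh}}{|k|}\Bigr)^{\!\beta}(1-\mathrm{e}^{-\mathrm{i}kh})^{\alpha-\beta}\widehat{f^{(\beta)}}(k),
\]
and then use $|1-\mathrm{e}^{-\mathrm{i}kh}|\le|kh|$ to obtain the pointwise estimate $|[\Delta_h^\alpha f]\widehat{\ \ }(k)|\le\delta^\beta|[\Delta_h^{\alpha-\beta}f^{(\beta)}]\widehat{\ \ }(k)|$ for every $k\ne 0$. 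To turn this into a Luxemburg-norm bound without being tripped up by the $k$-dependent exponent $p_k$, I would fix any $a>\|\Delta_h^{\alpha-\beta}f^{(\beta)}\|_{_{\scriptstyle \mathbf p,\,\mu}}$ and verify termwise that
\[
\sum_{k\in\mathbb Z}\mu_k\Bigl|\tfrac{[\Delta_h^\alpha f]\widehat{\ \ }(k)}{\delta^\beta a}\Bigr|^{p_k}\le\sum_{k\in\mathbb Z}\mu_k\Bigl|\tfrac{[\Delta_h^{\alpha-\beta}f^{(\beta)}]\widehat{\ \ }(k)}{a}\Bigr|^{p_k}\le 1,
\]
so $\delta^\beta a$ is admissible for $\Delta_h^\alpha f$. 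This yields $\|\Delta_h^\alpha f\|_{_{\scriptstyle \mathbf p,\,\mu}}\le\delta^\beta\|\Delta_h^{\alpha-\beta}f^{(\beta)}\|_{_{\scriptstyle \mathbf p,\,\mu}}$, and taking the supremum over $|h|\le\delta$ gives (vi).

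For (vii) with $\alpha,p\in\mathbb N$, I would use $\Delta_h^\alpha=(\Delta_h^1)^\alpha$ (iterating Lemma~\ref{Lemma_1}(iii)) together with the elementary identity $\Delta_{ph}^1=\sum_{j=0}^{p-1}T_{jh}\Delta_h^1$, where $T_s$ denotes translation by $s$; since $T_s$ and $\Delta_h^1$ commute, expanding the $\alpha$-fold power writes $\Delta_{ph}^\alpha$ as a sum of $p^\alpha$ translates of $\Delta_h^\alpha$, and translation invariance with the triangle inequality then produces $\|\Delta_{ph}^\alpha f\|_{_{\scriptstyle \mathbf p,\,\mu}}\le p^\alpha\|\Delta_h^\alpha f\|_{_{\scriptstyle \mathbf p,\,\mu}}$. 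The substitution $h=ph'$ inside the supremum yields (vii). Finally (viii) comes out by choosing $p=\lceil\eta/\delta\rceil$, which satisfies $p\delta\ge\eta$ and $p\le(\delta+\eta)/\delta$, and then combining (i) with (vii). The main obstacle is (vi): outside of that step, each item reduces essentially to a one-line application of Lemma~\ref{Lemma_1} or of translation invariance, whereas (vi) is the one spot where the pointwise Fourier estimate has to be pushed through the variable-exponent Luxemburg infimum term by term.
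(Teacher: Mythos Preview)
Your proposal is correct and follows essentially the same route as the paper's proof: items (ii), (iii), (v) come from Lemma~\ref{Lemma_1}; (iv) from the splitting $\Delta_h^1 f=\Delta_{h_1}^1 f+T_{h_1}\Delta_{h_2}^1 f$; (vi) from the pointwise Fourier estimate pushed through the Luxemburg infimum exactly as you describe; (vii) from the $p^\alpha$-term decomposition of $\Delta_{ph}^\alpha$; and (viii) from (vii) with $p=\lceil\eta/\delta\rceil$.

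The one spot where your argument is thinner than the paper's is the continuity clause in (i). You reduce it to the continuity of $h\mapsto\|\Delta_h^\alpha f\|_{_{\scriptstyle\mathbf p,\,\mu}}$, but Lemma~\ref{Lemma_1}(v) only gives the limit at $h=0$, not continuity at a general $h_0$; you would still need to run the same approximation-by-Fourier-sums argument at $h_0$ (which does work). The paper avoids this by a different device: for $0<\delta_1<\delta_2$ it writes $\Delta_h^\alpha f=\Delta_{h_1}^\alpha f+\sum_{j}(-1)^j\binom{\alpha}{j}\Delta_{jh_2}^1 f_{jh_1}$ with $h=h_1+h_2$, $h_1\le\delta_1$, $h_2\le\delta_2-\delta_1$, and obtains the explicit modulus-level bound
\[
\omega_\alpha(f,\delta_2)_{_{\scriptstyle\mathbf p,\,\mu}}-\omega_\alpha(f,\delta_1)_{_{\scriptstyle\mathbf p,\,\mu}}\le 2^{\{\alpha\}}\alpha\,\omega_1(f,\delta_2-\delta_1)_{_{\scriptstyle\mathbf p,\,\mu}},
\]
which tends to $0$ by the already-established continuity of $\omega_1$. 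Either route closes the gap; just be aware that ``monotonicity together with the limit at $0$'' alone does not yield continuity on $(0,\infty)$.
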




\section{ Direct approximation theorem.}

\begin{proposition}\label{Proposition 1} Let $\psi=\{\psi_k\}_{k=-\infty}^\infty$ be an arbitrary
sequence of complex numbers {such that}
$\psi_k\not=0$ and $\lim\limits_{|k|\to\infty}|\psi_k|=0$.
If for a function $f\in {\mathcal S}_{_{\scriptstyle  \mathbf p,\,\mu}}$  there exists a derivative
$f^{(\psi)}\in {\mathcal S}_{_{\scriptstyle  \mathbf p,\,\mu}}$, then the following inequality holds:
$$
    E_n(f)_{_{\scriptstyle \mathbf p,\,\mu}} \le \varepsilon_n E_n(f^{\psi})_{_{\scriptstyle \mathbf p,\,\mu}},\quad where\quad  \varepsilon_n=\max\limits_{|k|\ge n} |\psi_k|.
$$
\end{proposition}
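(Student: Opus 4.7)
The plan is to work directly from formula (\ref{S_M.4}) for the best approximation in ${\mathcal S}_{\mathbf p,\mu}$, which expresses $E_n(f)_{\mathbf p,\mu}$ as the Luxemburg-type infimum of those $a>0$ making $\sum_{|k|\ge n}\mu_k(|\widehat f(k)|/a)^{p_k}\le 1$. The only substantive input beyond this formula is the Fourier-coefficient identity (\ref{Fourier coeff}), namely $\widehat f(k)=\psi_k\widehat{f^{\psi}}(k)$ for every $k\ne 0$, together with the trivial but crucial bound $|\psi_k|\le\varepsilon_n$ whenever $|k|\ge n$ (for $n\ge 1$, so that $k=0$ does not enter the sum).

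Concretely, I would proceed as follows. Fix $n\ge 1$ and $\varepsilon>0$, and choose $a^{*}$ with $E_n(f^{\psi})_{\mathbf p,\mu}\le a^{*}<E_n(f^{\psi})_{\mathbf p,\mu}+\varepsilon$ such that
\[
\sum_{|k|\ge n}\mu_k\bigl(|\widehat{f^{\psi}}(k)|/a^{*}\bigr)^{p_k}\le 1;
\]
such an $a^{*}$ exists by the definition of the infimum in (\ref{S_M.4}). Set $a:=\varepsilon_n\, a^{*}$. Using $\widehat f(k)=\psi_k\widehat{f^{\psi}}(k)$ and $|\psi_k|/\varepsilon_n\le 1$ for $|k|\ge n$, the ratio $|\widehat f(k)|/a=(|\psi_k|/\varepsilon_n)\,|\widehat{f^{\psi}}(k)|/a^{*}$ is bounded above by $|\widehat{f^{\psi}}(k)|/a^{*}$. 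Since $p_k\ge 1\ge 0$ by (\ref{l_p.2001}), raising a number in $[0,1)$-rescaled form to the power $p_k$ preserves the inequality term by term, giving
\[
\sum_{|k|\ge n}\mu_k\bigl(|\widehat f(k)|/a\bigr)^{p_k}\le \sum_{|k|\ge n}\mu_k\bigl(|\widehat{f^{\psi}}(k)|/a^{*}\bigr)^{p_k}\le 1.
\]
Hence $a$ is admissible in the infimum defining $E_n(f)_{\mathbf p,\mu}$, so $E_n(f)_{\mathbf p,\mu}\le a=\varepsilon_n a^{*}<\varepsilon_n(E_n(f^{\psi})_{\mathbf p,\mu}+\varepsilon)$, and letting $\varepsilon\to 0+$ yields the claim.

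There is no real obstacle here; the proof is essentially a one-line manipulation of the modular. The only points worth being careful about are: (a) restricting to indices with $|k|\ge n\ge 1$ so that the $\psi$-derivative identity (\ref{Fourier coeff}) applies (the coefficient at $k=0$ never enters the tail sum); (b) justifying the existence of an admissible $a^{*}$ arbitrarily close to the infimum, for which the hypothesis $\lim_{|k|\to\infty}|\psi_k|=0$ guarantees that $\varepsilon_n$ is actually attained as a maximum and, more importantly, that $\varepsilon_n$ is finite; (c) using monotonicity of $t\mapsto t^{p_k}$ on $[0,\infty)$, which holds uniformly since $p_k\ge 1$. All three points are routine, so the proposition reduces to the display above.
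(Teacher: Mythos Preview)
Your proof is correct and follows essentially the same approach as the paper: both arguments combine formula (\ref{S_M.4}) for $E_n(f)_{\mathbf p,\mu}$ with the Fourier-coefficient identity (\ref{Fourier coeff}) and the bound $|\psi_k|\le\varepsilon_n$ for $|k|\ge n$ to compare the two modulars. The paper's version is slightly more compressed, manipulating the infima directly rather than choosing an explicit near-minimizer $a^{*}$, but the content is identical.
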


\begin{proof}
According to  (\ref{S_M.4}) and  (\ref{Fourier coeff}), we have
  \begin{eqnarray}\nonumber
E_n (f)_{_{\scriptstyle \mathbf p,\,\mu}}
 &=& \inf\bigg\{a>0: \sum\limits_{|k|\ge n}\mu_k \Big({|\psi_k\widehat  f^{\psi }(k)|}/{a}\Big)^{p_k}\le 1\bigg\}
\\  \nonumber
&\le&\inf\bigg\{a>0: \sum\limits_{|k|\ge n}\mu_k \Big({\varepsilon_n|\widehat  f^{\psi }(k)|}/{a}\Big)^{p_k}\le 1\bigg\}\le \varepsilon_n E_n(f^{\psi})_{_{\scriptstyle \mathbf p,\,\mu}}.
\end{eqnarray}
\end{proof}

Note that if  $\varepsilon_n=\max\limits_{|k|\ge n} |\psi_k|=|\psi_{k_0}|$, where $k_0$ is   an integer, $|k_0|\ge n$, then for an arbitrary polynomial 
$\tilde{\tau}_{k_0}(x):=c\,\mathrm{e}^{\mathrm{i}k_0x}$, $c\not =0$, obviously, the equality holds:
$$
    E_n(\tilde{\tau}_{k_0})_{_{\scriptstyle \mathbf p,\,\mu}}= \varepsilon_n
    E_n(\tilde{\tau}_{k_0}^{\psi})_{_{\scriptstyle \mathbf p,\,\mu}}.
$$

\begin{theorem}\label{Theorem_1}  Assume that \ ${\bf p}=\{p_k\}_{k=-\infty}^\infty$  and ${\bf \mu}=\{\mu_k\}_{k=-\infty}^\infty$ are sequences of nonnegative numbers such that ${1<p_k \le K}$,  $k\in {\mathbb Z}$, and the function  $ f\in {\mathcal S}_{_{\scriptstyle  \bf p,\,\mu}}$. Then for any numbers $\alpha>0$ and
 $n \in \mathbb{N}$,  the following inequality holds:
$$
    E_n (f)_{_{\scriptstyle \bf p,\,\mu}} \le C(\alpha)\, \omega _\alpha(f; n^{-1})_{_{\scriptstyle \bf p,\,\mu}}.
$$
where $C=C(\alpha)$ is a constant that does not depend on $f$ and $n.$
\end{theorem}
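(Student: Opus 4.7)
The plan is to test the Luxemburg inequality $\|\Delta_h^\alpha f\|_{_{\scriptstyle \mathbf p,\,\mu}}\le M:=\omega_\alpha(f,1/n)_{_{\scriptstyle \mathbf p,\,\mu}}$ at every admissible increment $h$ and then average. By Lemma~\ref{Lemma_1}\,(ii) the Fourier coefficients of $\Delta_h^\alpha f$ are $(1-\mathrm{e}^{-\mathrm{i}kh})^\alpha \widehat f(k)$, so for every $|h|\le 1/n$ the Luxemburg characterization forces
$$\sum_{k\in\mathbb Z}\mu_k\,|1-\mathrm{e}^{-\mathrm{i}kh}|^{\alpha p_k}\Big(\tfrac{|\widehat f(k)|}{M}\Big)^{p_k}\le 1.$$
Integrating this inequality over $h\in[-1/n,1/n]$, dividing by $2/n$, and swapping sum and integral (Tonelli; all summands non-negative) would give
$$\sum_{k\in\mathbb Z}\mu_k\,A_k\,\Big(\tfrac{|\widehat f(k)|}{M}\Big)^{p_k}\le 1,\qquad A_k:=\tfrac{n}{2}\!\int_{-1/n}^{1/n}\!|1-\mathrm{e}^{-\mathrm{i}kh}|^{\alpha p_k}dh.$$

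The heart of the argument would be to show $A_k\ge c(\alpha,K)>0$ uniformly for all $|k|\ge n$ and $p_k\in(1,K]$. Using $|1-\mathrm{e}^{-\mathrm{i}kh}|=2|\sin(kh/2)|$ and the change of variable $u=kh/2$, the factor reduces to a constant multiple of $\frac{n}{|k|}\int_{0}^{|k|/(2n)}|\sin u|^{\alpha p_k}du$. For $n\le |k|\le 4\pi n$ I would bound the inner integral from below by $\int_0^{1/2}(2u/\pi)^{\alpha K}du>0$ using $|\sin u|\ge 2u/\pi$ on $[0,\pi/2]$, while $n/|k|\ge 1/(4\pi)$ is harmless. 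For $|k|\ge 4\pi n$ the integration interval contains at least $\lfloor |k|/(2\pi n)\rfloor\ge |k|/(4\pi n)$ full periods of $|\sin|^{\alpha p_k}$, which produces a factor $|k|/n$ that exactly cancels the $n/|k|$ prefactor, again leaving a positive absolute constant. Uniformity in the exponent is automatic since $\alpha p_k\in[\alpha,\alpha K]$ traps the relevant quantities in a compact interval.

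Combining these, $\sum_{|k|\ge n}\mu_k(|\widehat f(k)|/M)^{p_k}\le 1/c(\alpha,K)$. Since $p_k\ge 1$, for any $C\ge 1$ the bound $C^{-p_k}\le C^{-1}$ is available, so taking $C(\alpha):=\max\bigl(1,\,1/c(\alpha,K)\bigr)$ transforms the estimate into
$$\sum_{|k|\ge n}\mu_k\Big(\tfrac{|\widehat f(k)|}{C(\alpha)M}\Big)^{p_k}\le 1,$$
which by formula (\ref{S_M.4}) is exactly $E_n(f)_{_{\scriptstyle \mathbf p,\,\mu}}\le C(\alpha)\omega_\alpha(f,1/n)_{_{\scriptstyle \mathbf p,\,\mu}}$. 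The main obstacle is the uniform lower bound on $A_k$: one must simultaneously control the near-threshold regime $|k|\sim n$, where the $u$-integral is too short for any periodicity argument and one must fall back on the linear estimate $|\sin u|\gtrsim u$, and the large-$|k|$ regime, where the $n/|k|$ factor is only absorbed by counting whole periods; throughout, the constants must remain stable as $p_k$ ranges over $(1,K]$.
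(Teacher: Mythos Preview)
Your argument is correct and genuinely different from the paper's. The paper constructs an approximating polynomial via convolution with Jackson-type kernels, then invokes the equivalence of the Luxemburg and Orlicz norms (Lemma~\ref{Lemma_3}) to pull the norm inside the integral; this duality step is the reason the hypothesis $p_k>1$ appears. The integer-$\alpha$ case is handled first using property~(viii) of Lemma~\ref{Lemma_2}, and fractional $\alpha$ is then reduced to the integer case via property~(ii). Your route bypasses all of this: you stay on the modular/Fourier side throughout, average the Luxemburg inequality $\rho(\Delta_h^\alpha f/M)\le 1$ over $h\in[-1/n,1/n]$, and reduce everything to the elementary uniform lower bound on the averaged multiplier $A_k$ for $|k|\ge n$. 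This is more direct, treats all $\alpha>0$ at once, and in fact never uses $p_k>1$ (only $1\le p_k\le K$), so it would extend the theorem slightly. The trade-off is that your constant $C(\alpha)=\max(1,1/c(\alpha,K))$ depends on the upper bound $K$ for the exponents, whereas the paper's kernel argument produces a constant depending on $\alpha$ alone; since the theorem only demands independence from $f$ and $n$, this is harmless. Two small points worth making explicit in a final write-up: the case $M=0$ (where dividing by $M$ is illegal) is trivial since then $\widehat f(k)=0$ whenever $\mu_k>0$ and $|k|\ge 1$; and the passage from $\|\Delta_h^\alpha f\|_{_{\scriptstyle\mathbf p,\mu}}\le M$ to $\rho(\Delta_h^\alpha f/M)\le 1$ uses left-continuity of the modular in the scaling parameter, which follows from monotone convergence.
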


Let us use the proof scheme from  \cite{Stechkin_1951}, where the similar estimates were obtained in the spaces $C^r({\mathbb T})$. In order to adapt this scheme in accordance with the properties of the spaces ${\mathcal S}_{_{\scriptstyle  \bf p,\,\mu}}$, before proving, we formulate the auxiliary Lemma \ref{Lemma_3}. This assertion
establishes the equivalence of the Luxembourg norm (\ref{S_M.1}) and the Orlicz norm, where the latter is defined as follows.

For given sequences ${\bf p}=\{p_k\}_{k=-\infty}^\infty$  and ${\bf \mu}=\{\mu_k\}_{k=-\infty}^\infty$ of nonnegative numbers such that ${1<p_k \le K}$, $k\in \mathbb{Z}$, consider the sequence ${\bf q}=\{q_k\}_{k\in \mathbb{Z}}$ defined by the equalities ${1/{p_k}+1/q_k=1}$, $k\in {\mathbb Z}$, 
and the set $\Lambda=\Lambda({\bf p},\mu)$ of all  numerical sequences  $\lambda=\{\lambda_k\}_{k\in \mathbb{Z}}$ such that  $\sum_{k\in \mathbb{Z}}\mu_k|\lambda_k|^{q_k}{\le} 1$. For any function  $f\in {\mathcal S}_{_{\scriptstyle  \bf p,\,\mu}}$, define its Orlicz norm by the equality
\begin{equation} \label{def-Orlicz-norm}
    \|f\|^\ast_{_{\scriptstyle \bf p,\,\mu}}:= \sup \Big\{ \sum\limits_{k \in \mathbb{Z}}
    \mu_k\lambda_k|\widehat{f}(k) |: \quad  \lambda\in \Lambda\Big\}.
\end{equation}

\begin{lemma}\label{Lemma_3} Assume that \ ${\bf p}=\{p_k\}_{k=-\infty}^\infty$  and ${\bf \mu}=\{\mu_k\}_{k=-\infty}^\infty$ are sequences of nonnegative numbers such that ${1<p_k \le K}$,  $k\in {\mathbb Z}$. Then for any function $f \in {\mathcal S}_{_{\scriptstyle  \bf p,\,\mu}}$,
\begin{equation} \label{estim-for-norms}
    \| f\|_{_{\scriptstyle \bf p,\,\mu}} \le \| f\|^\ast_{_{\scriptstyle \bf p,\,\mu}}\le 2 \,\| f\|_{_{\scriptstyle \bf p,\,\mu}}.
\end{equation}
\end{lemma}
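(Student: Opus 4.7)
The claim contains two inequalities, and my plan is to prove both via an appropriate version of Young's inequality for the exponent pair $p_k, q_k$ with $1/p_k + 1/q_k = 1$. Since $p_k>1$ (and hence $q_k$ is finite with $q_k\ge 1$), Young reads $uv \le u^{p_k}/p_k + v^{q_k}/q_k \le u^{p_k} + v^{q_k}$ for $u,v \ge 0$, where the second, crude estimate uses $1/p_k,\,1/q_k \le 1$. Throughout, I write $\rho(f/a) := \sum_{k\in\mathbb Z}\mu_k |\widehat f(k)/a|^{p_k}$ for the modular associated with the Luxemburg norm (\ref{S_M.1}).

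For the right inequality $\| f\|^\ast_{_{\scriptstyle {\bf p},\,\mu}} \le 2\| f\|_{_{\scriptstyle {\bf p},\,\mu}}$, set $a := \| f\|_{_{\scriptstyle {\bf p},\,\mu}}$. Monotone convergence applied as $a'\downarrow a$ shows that $\rho$ is left-continuous in $1/a$, so $\rho(f/a)\le 1$. Given any admissible $\lambda\in\Lambda$, apply Young with $u=|\widehat f(k)|/a$ and $v=|\lambda_k|$, multiply by $\mu_k$, and sum over $k\in\mathbb Z$:
\[
\frac{1}{a}\sum_{k\in\mathbb Z}\mu_k\lambda_k|\widehat f(k)| \;\le\; \rho(f/a) + \sum_{k\in\mathbb Z}\mu_k|\lambda_k|^{q_k} \;\le\; 1+1 = 2.
\]
Taking the supremum over $\lambda\in\Lambda$ yields the claim.

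For the harder left inequality $\| f\|_{_{\scriptstyle {\bf p},\,\mu}} \le \| f\|^\ast_{_{\scriptstyle {\bf p},\,\mu}}$, I would show that $\| f\|^\ast_{_{\scriptstyle {\bf p},\,\mu}} \ge a$ for every $0<a<\| f\|_{_{\scriptstyle {\bf p},\,\mu}}$. For such $a$ the definition of the Luxemburg norm forces $\rho(f/a)>1$, so by monotone convergence there exists $N\in\mathbb N$ with
\[
\rho_N \;:=\; \sum_{|k|\le N}\mu_k|\widehat f(k)/a|^{p_k} \;>\; 1.
\]
I then test the sup in (\ref{def-Orlicz-norm}) with the dual sequence
\[
\lambda_k := |\widehat f(k)/a|^{p_k-1}\,\rho_N^{-1/q_k} \quad (|k|\le N), \qquad \lambda_k := 0 \quad (|k|>N).
\]
Using $(p_k-1)q_k = p_k$ one verifies $\sum_k \mu_k|\lambda_k|^{q_k} = \rho_N/\rho_N = 1$, so $\lambda\in\Lambda$. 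Since $\rho_N>1$ and $q_k\ge 1$, one has $\rho_N^{1/q_k}\le \rho_N$, hence
\[
\sum_{k\in\mathbb Z}\mu_k\lambda_k|\widehat f(k)| \;=\; a\sum_{|k|\le N}\frac{\mu_k|\widehat f(k)/a|^{p_k}}{\rho_N^{1/q_k}} \;\ge\; \frac{a}{\rho_N}\sum_{|k|\le N}\mu_k|\widehat f(k)/a|^{p_k} \;=\; a.
\]
Letting $a\uparrow \| f\|_{_{\scriptstyle {\bf p},\,\mu}}$ finishes the proof.

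The main obstacle is the variable-exponent setup, which blocks a direct H\"older-type choice of a dual sequence with a single normalizing scalar: as soon as $q_k$ varies with $k$, the factor $\rho_N^{1/q_k}$ is not constant and must be handled term by term. The observation that rescues the lower bound is the elementary inequality $x^{1/q_k}\le x$ whenever $x\ge 1$ and $q_k\ge 1$, together with the truncation to $\rho_N$ (in place of the possibly infinite $\rho$) which both makes the normalization well-defined and guarantees $x=\rho_N>1$ so that this inequality can be invoked.
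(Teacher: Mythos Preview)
Your proof is correct. The upper bound $\|f\|^\ast\le 2\|f\|$ is done exactly as in the paper: divide by $\|f\|_{_{\scriptstyle \mathbf p,\,\mu}}$ and apply Young's inequality termwise.

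For the lower bound both arguments test the Orlicz norm against a dual sequence of the form $|\widehat f(k)/c|^{p_k-1}$, but they normalize it differently. The paper first proves the implication ``$\|f\|^\ast_{_{\scriptstyle \mathbf p,\,\mu}}\le 1\Rightarrow \rho(f)\le 1$'' by contradiction: assuming $\rho(f)>1$ (finite, since $f\in{\mathcal S}_{_{\scriptstyle \mathbf p,\,\mu}}$), it invokes continuity of $\rho\mapsto\sum_k\mu_k|\widehat f(k)/\rho|^{p_k}$ to find a \emph{single} scalar $\rho>1$ making this sum exactly~$1$, so that $\tilde\lambda_k=(|\widehat f(k)|/\rho)^{p_k-1}$ is automatically admissible and gives $\|f\|^\ast\ge\rho>1$; the inequality then follows by applying this to $f/\|f\|^\ast$. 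You instead work directly with $a<\|f\|$, truncate to a finite partial modular $\rho_N>1$, and normalize with the $k$-dependent factor $\rho_N^{-1/q_k}$, salvaging the bound through the elementary observation $\rho_N^{1/q_k}\le\rho_N$ for $\rho_N>1$, $q_k\ge 1$. The paper's route is a little slicker (one scalar does the whole job) and uses the characterization of ${\mathcal S}_{_{\scriptstyle \mathbf p,\,\mu}}$ via finiteness of the modular; your route is more hands-on but sidesteps the intermediate value argument entirely and never needs $\rho(f/a)$ to be finite.
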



\textit{Proof of Theorem \ref{Theorem_1}.} Let $\{K_n(t)\}_{n=1}^\infty$ be a sequence of kernels (where $K_n(t)$ is a trigonometric polynomial of order not greater than $n$), satisfying for all $n=1,2,\ldots$ the conditions:
\begin{equation} \label{K-cond-1}
    \int\limits_{-\pi}^\pi K_n(t)~{\rm d}t=1, 
\end{equation}
\begin{equation} \label{K-cond-3}
    \int\limits_{-\pi}^\pi |t|^r |K_n(t)|~{\rm d}t\le C(r) (n+1)^{-r} , \quad r=0,1,2,\ldots
\end{equation}
In the role of such kernels, in particular, we can take  the well-known Jackson kernels of sufficiently great order, that is,
$$
    K_n(t)=b_p\Big(\frac{\sin pt/2}{\sin t/2}\Big)^{2k_0},
$$
where $k_0$ is an integer that does not depend on  $n,~ 2k_0\ge r+2,$ the positive integer $p$ is
determined from the inequality
${n}/{(2k_0)}<p\le {n}/{(2k_0)}+1, $
and the constant $b_p$ is chosen due to the normalization condition (\ref{K-cond-1}).

It was shown in \cite{Stechkin_1951} that for any sequence of kernels  $\{K_n(t)\}$  satisfying conditions (\ref{K-cond-1})--(\ref{K-cond-3}), the following estimate holds:
\begin{equation} \label{K-cond-4}
    \int\limits_{-\pi}^\pi (|t|+n^{-1})^r ~|K_n(t)|~{\rm d}t\le C(r) n^{-r} , \quad (r, n=1,2,\ldots).
\end{equation}

Let us first consider the case of $\alpha \in \mathbb{N}$. Set
$$
    \sigma_{n-1}(x)= (-1)^{\alpha+1} \int\limits_{-\pi}^\pi K_{n-1}(t) \sum\limits_{j=1}^\alpha (-1)^j {\alpha \choose j} f({ x}-jt)~{\rm d}t.
$$
It is clear that $\sigma_{n-1}(x)$ is a trigonometric polynomial
which order does not exceed $n$. Further, in view of  (\ref{K-cond-1}), we have
  \begin{eqnarray}\nonumber
f(x)-\sigma_{n-1}(x)&=&    (-1)^{\alpha}\int\limits_{-\pi}^\pi K_{n-1}(t)
    \sum\limits_{j=0}^\alpha (-1)^j {\alpha \choose j} f({ x}-jt)~{\rm d}t\\ \nonumber
 &=& (-1)^{\alpha}\int\limits_{-\pi}^\pi K_{n-1}(t) \Delta_t^\alpha f(x)~{\rm d}t.
\end{eqnarray}

Hence, taking into account  relations (\ref{def-Orlicz-norm})--(\ref{estim-for-norms}) and the definition of the set $\Lambda$, we obtain
$$
    E_n (f)_{_{\scriptstyle \bf p,\,\mu}} \le \| f-\sigma_{n-1} \|_{_{\scriptstyle \bf p,\,\mu}} \le
    \| f-\sigma_{n-1} \|^\ast_{_{\scriptstyle \bf p,\,\mu}}=
     \Big\|(-1)^{\alpha}\int\limits_{-\pi}^\pi K_{n-1}(t) \Delta_t^\alpha f ~{\rm d}t  \Big\|^\ast_{_{\scriptstyle \bf p,\,\mu}}
$$
$$
    = \sup\Big\{\sum\limits_{k \in \mathbb{Z}}\mu_k \lambda_k\Big|\frac{1}{2\pi}
    \int\limits_{-\pi} ^\pi  \bigg(\int\limits_{-\pi} ^\pi
    K_{n-1}(t) \Delta_t^\alpha f(x)~ {\rm d}t\bigg)~\mathrm{e}^{-\mathrm{i}kx}~{\rm d}x  \Big|: ~
    \lambda \in \Lambda \Big\}.
$$
Applying now the Fubini theorem and again using estimate (\ref{estim-for-norms}), we find
  \begin{eqnarray}\nonumber
E_n (f)_{_{\scriptstyle \bf p,\,\mu}} \!\!\!\!&\le&\!\!\!\!     \int\limits_{-\pi} ^\pi |K_{n-1}(t)|
    \sup\Big\{\sum\limits_{k \in \mathbb{Z}}\mu_k
    \lambda_k\Big| \frac{1}{2\pi} \int\limits_{-\pi} ^\pi
    \Delta_t^\alpha f(x) \mathrm{e}^{-\mathrm{i}kx} {\rm d}x \Big| :
    \lambda\in \Lambda \Big\}{\rm d}t\\ \nonumber
\! \!\!\!&\le&\!\!\!\!  2 \int\limits_{-\pi} ^\pi|K_{n-1}(t)| \, \|\Delta_t^\alpha f(x)\|_{_{\scriptstyle \bf p,\,\mu}}^\ast  {\rm d}t \le
    2 \int\limits_{-\pi} ^\pi|K_{n-1}(t)|\,  \|\Delta_t^\alpha f(x)\|_{_{\scriptstyle \bf p,\,\mu}}  {\rm d}t
    \\ \label{int-K-omega}
     \!\!\!\!&\le&\!\!\!\!  2\int\limits_{-\pi} ^\pi |K_{n-1}(t)| \omega_\alpha(f;|t|)_{_{\scriptstyle \bf p,\,\mu}}~{\rm d}t.
\end{eqnarray}
To estimate the integral on the right-hand side of relation (\ref{int-K-omega}), we use the property (viii) of Lemma~\ref{Lemma_2}. Setting $\eta=|t|$, $\delta=n^{-1}$, we see that $\omega_\alpha(f; |t|)_{_{\scriptstyle \bf p,\,\mu}} \le n^\alpha (|t|+n^{-1})^\alpha \omega_\alpha(f; n^{-1})_{_{\scriptstyle \bf p,\,\mu}}$. Using this inequality and  (\ref{K-cond-4}), we get
\begin{eqnarray}\nonumber
\int\limits_{-\pi} ^\pi |K_{n-1}(t)| \omega_\alpha(f;|t|)_{_{\scriptstyle \bf p,\,\mu}} {\rm d}t &\!\!\le\!\!&  n^\alpha  \omega_\alpha(f; n^{-1})_{_{\scriptstyle \bf p,\,\mu}} \int\limits_{-\pi} ^\pi (|t|+n^{-1})^\alpha~|K_{n-1}(t)| {\rm d}t\\ \nonumber
 &\!\!\le\!\!&  C(\alpha) \omega_\alpha (f; n^{-1})_{_{\scriptstyle \bf p,\,\mu}}.
\end{eqnarray}
Thus, in the case of $\alpha \in \mathbb{N}$, the theorem is proved.

If $\alpha >0$, $\alpha \not \in \mathbb{N},$ then we denote by $\beta$ an arbitrary positive integer satisfying the condition $\beta-1<\alpha<\beta$. Due to property (ii) of Lemma \ref{Lemma_2}, we obtain
$$
    E_n (f)_{_{\scriptstyle \bf p,\,\mu}} \le C(\beta)~ \omega_\beta (f; n^{-1})_{_{\scriptstyle \bf p,\,\mu}} \le
    C(\beta)~ \omega_\alpha (f; n^{-1})_{_{\scriptstyle \bf p,\,\mu}}.
$$
\vskip -3mm$\hfill\Box$


\section{ Inverse approximation theorem.} Before proving the inverse approximation theorem, let us formulate the known Bernstein inequality in which the norm of the derivative of a trigonometric polynomial is estimated in terms of the norm of this polynomial (see, e.g.  \cite[Ch.~4]{A_Timan_M1960}), \cite[Ch.~4]{M_Timan_M2009}).

\begin{proposition}\label{Proposition 2} Let $\psi=\{\psi_k\}_{k=-\infty}^\infty$ be an arbitrary sequence of complex numbers, $\psi_k \not=0$. Then for any $\tau_n\in {\mathcal T}_{n}$, $n\in \mathbb{N}$, the following inequality holds:
$$
    \|\tau^\psi_n \|_{_{\scriptstyle \bf p,\,\mu}}\le \frac 1{\epsilon_n}\|\tau_n\|_{_{\scriptstyle \bf p,\,\mu}}, \quad
    \epsilon_n:=\min_{0<| k | \le n}|\psi_k|,
$$
\end{proposition}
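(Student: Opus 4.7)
My plan is to argue directly from the Luxemburg-norm definition (\ref{S_M.1}) together with the coefficient identity (\ref{Fourier coeff}) applied to $\tau_n$. The central observation is that $\tau_n\in\mathcal{T}_n$ has finitely many nonzero Fourier coefficients, supported on $\{|k|\le n\}$, while the definition of the $\psi$-derivative kills the $k=0$ mode. Consequently, both $\|\tau_n\|_{_{\scriptstyle\mathbf p,\,\mu}}$ and $\|\tau_n^{\psi}\|_{_{\scriptstyle\mathbf p,\,\mu}}$ reduce to infima of conditions $\sum_{0<|k|\le n}\mu_k(\,\cdot\,/a)^{p_k}\le 1$, so the whole argument becomes an index-by-index comparison on the finite set $0<|k|\le n$.

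Next, I would fix an arbitrary $a>0$ admissible for $\|\tau_n\|_{_{\scriptstyle\mathbf p,\,\mu}}$, and show that $a/\epsilon_n$ is admissible for $\|\tau_n^{\psi}\|_{_{\scriptstyle\mathbf p,\,\mu}}$. By definition of $\epsilon_n$, $|\psi_k|\ge \epsilon_n$ for every $0<|k|\le n$, and (\ref{Fourier coeff}) gives $\widehat{\tau_n^\psi}(k)=\widehat{\tau_n}(k)/\psi_k$, so
$$
\sum_{0<|k|\le n}\mu_k\Big(\frac{|\widehat{\tau_n^\psi}(k)|}{a/\epsilon_n}\Big)^{p_k}
=\sum_{0<|k|\le n}\mu_k\Big(\frac{\epsilon_n}{|\psi_k|}\Big)^{p_k}\Big(\frac{|\widehat{\tau_n}(k)|}{a}\Big)^{p_k}
\le \sum_{0<|k|\le n}\mu_k\Big(\frac{|\widehat{\tau_n}(k)|}{a}\Big)^{p_k}\le 1 .
$$
Passing to the infimum over all admissible $a$ for $\tau_n$ yields the desired bound $\|\tau_n^{\psi}\|_{_{\scriptstyle\mathbf p,\,\mu}}\le \epsilon_n^{-1}\,\|\tau_n\|_{_{\scriptstyle\mathbf p,\,\mu}}$. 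The structure is essentially dual to the proof of Proposition~\ref{Proposition 1}, with $\max_{|k|\ge n}|\psi_k|$ there replaced by $\min_{0<|k|\le n}|\psi_k|$ here, and the infinite coefficient tail replaced by the finite polynomial spectrum.

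I do not expect any real obstacle. The one technicality worth a line is that $\tau_n^{\psi}$ genuinely belongs to $\mathcal{S}_{_{\scriptstyle\mathbf p,\,\mu}}$, but this is automatic because $\tau_n^{\psi}$ is itself a trigonometric polynomial of degree at most $n$ (with its constant term removed). Sharpness of the constant $1/\epsilon_n$ is immediate from the extremal choice $\tau_n(x)=c\,\mathrm{e}^{\mathrm{i}k_0 x}$ where $k_0$ realizes the minimum defining $\epsilon_n$, paralleling the sharpness remark made after Proposition~\ref{Proposition 1}.
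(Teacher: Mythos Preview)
Your proposal is correct and follows essentially the same route as the paper: both arguments write out the Luxemburg-norm infimum for $\tau_n^\psi$ using $\widehat{\tau_n^\psi}(k)=c_k/\psi_k$ on $0<|k|\le n$, bound $|\psi_k|^{-1}\le \epsilon_n^{-1}$ termwise, and pull the factor $1/\epsilon_n$ outside the infimum (the paper phrases this as a direct inequality between infima, you phrase it as ``$a$ admissible for $\tau_n$ $\Rightarrow$ $a/\epsilon_n$ admissible for $\tau_n^\psi$'', which is the same thing). The sharpness remark via $\tau_n(x)=c\,\mathrm{e}^{\mathrm{i}k_0x}$ also matches the paper exactly.
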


\begin{proof}
Let $\tau_n(x)=\sum_{|k|\le n}  c_{k}\mathrm{e}^{\mathrm{i}({k,x})}$, $c_k\in {\mathbb C}$.  By the definition of the $\psi$-derivative and equalities (\ref{Fourier coeff}), we get
$$
    \|\tau^\psi_n \|_{_{\scriptstyle \bf p,\,\mu}}= 
       \inf \Big\{a>0:  \sum\limits_{0<| k | \le n}\mu_k  \Big({|c_k| /
       |a\psi_k |}\Big)^{p_k}\le 1\Big\}
$$
$$
    \le  \max_{0<| k | \le n}{|\psi_k|^{-1}} \inf \Big\{a>0: ~ \sum\limits_{0<| k | \le n}\mu_k
      \Big({| c_k| /  a}\Big)^{p_k}\le 1 \Big\}= \frac 1{\epsilon_n} \|\tau_n\|_{_{\scriptstyle \bf p,\,\mu}}.
$$
\end{proof}

Note that if  $\min\limits_{0<| k | \le n} |\psi_k|=|\psi_{k_0}|$,   then for an arbitrary polynomial of the form $\tilde{\tau}_{k_0}(x):=c\,\mathrm{e}^{\mathrm{i}k_0x}$, $c\not =0$, we have
\begin{eqnarray}\nonumber
\|\tilde{\tau}_{k_0}^\psi \|_{_{\scriptstyle \bf p,\,\mu}}&=&  \inf \Big\{a>0: \ \mu_{k_0}\Big({|c_{k_0}|}/{|a\psi_{k_0}|}\Big)^{p_k}\le 1 \Big\}\\ \nonumber
 &=&  \frac 1{|\psi_{k_0}|} \inf \Big\{a>0: \      \mu_{k_0}\Big({| c_{k_0}| /  a}\Big)^{p_k}\le 1 \Big\}=
    \frac 1{\epsilon_n} \| \tau_{k_0} \|_{_{\scriptstyle \bf p,\,\mu}}.
\end{eqnarray}


\begin{corollary}\label{Corollary 1} Let $\psi=\{\psi_k\}_{k=-\infty}^\infty$ be an arbitrary sequence of complex numbers {such that}  $|\psi_{-k}|=|\psi_k|\ge|\psi_{k+1}|>0$.  Then for any $\tau_n\in {\mathcal T}_{n}$, $n\in \mathbb{N}$,
$$
    \|\tau^\psi_n \|_{_{\scriptstyle \bf p,\,\mu}}\le \frac 1{|\psi_n|}\|\tau_n\|_{_{\scriptstyle \bf p,\,\mu}}.
$$
In particular, if $\psi_k=|k|^{-r}$, $r>0$, $k\in {\mathbb Z}\setminus\{0\}$, then
$$
    \|\tau^\psi_n \|_{_{\scriptstyle \bf p,\,\mu}} = \|\tau^{(r)}_n \|_{_{\scriptstyle \bf p,\,\mu}} \le
    n^r \|\tau_n\|_{_{\scriptstyle \bf p,\,\mu}}.
$$
\end{corollary}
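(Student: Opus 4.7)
The plan is to reduce Corollary~\ref{Corollary 1} to a direct application of Proposition~\ref{Proposition 2}: once the quantity $\epsilon_n=\min_{0<|k|\le n}|\psi_k|$ is identified with $|\psi_n|$ under the corollary's hypotheses, the inequality follows by substitution.

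First, I would unpack the monotonicity assumption $|\psi_{-k}|=|\psi_k|\ge|\psi_{k+1}|>0$. The symmetry $|\psi_{-k}|=|\psi_k|$ allows me to restrict the computation of $\min_{0<|k|\le n}|\psi_k|$ to positive indices $1\le k\le n$. On this range, the inequality $|\psi_k|\ge|\psi_{k+1}|$ says $|\psi_k|$ is non-increasing, so its minimum on $\{1,\dots,n\}$ is attained at $k=n$. Therefore $\epsilon_n=|\psi_n|>0$, and plugging this into the conclusion of Proposition~\ref{Proposition 2} yields
\[
\|\tau_n^\psi\|_{_{\scriptstyle \bf p,\,\mu}}\le \frac{1}{|\psi_n|}\|\tau_n\|_{_{\scriptstyle \bf p,\,\mu}}.
\]

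For the special case $\psi_k=|k|^{-r}$ with $r>0$, $k\in{\mathbb Z}\setminus\{0\}$, I would just verify the hypotheses: $|\psi_{-k}|=|\psi_k|=|k|^{-r}$ is symmetric, $|k|^{-r}\ge(|k|+1)^{-r}$ is non-increasing in $|k|$, and it is strictly positive. Hence the general bound applies with $|\psi_n|=n^{-r}$, giving $\|\tau_n^{(r)}\|_{_{\scriptstyle \bf p,\,\mu}}\le n^r\|\tau_n\|_{_{\scriptstyle \bf p,\,\mu}}$, which is the classical Bernstein inequality adapted to the present setting.

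There is no real obstacle here; this is a bookkeeping corollary of Proposition~\ref{Proposition 2}. The only subtlety worth mentioning explicitly is that, although the statement of Proposition~\ref{Proposition 2} permits arbitrary nonzero $\psi_k$, the monotonicity/symmetry hypothesis of the corollary is exactly what is needed to replace the possibly awkward minimum $\min_{0<|k|\le n}|\psi_k|$ by the single, easily computed value $|\psi_n|$.
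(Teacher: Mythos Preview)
Your proposal is correct and matches the paper's approach: the paper states Corollary~\ref{Corollary 1} without proof, treating it as an immediate consequence of Proposition~\ref{Proposition 2}, which is exactly the reduction you carry out by identifying $\epsilon_n=|\psi_n|$ under the symmetry and monotonicity hypotheses.
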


\begin{theorem}\label{Theorem_2} If $ f\in {\mathcal S}_{_{\scriptstyle  \bf p,\,\mu}}$, then for any $\alpha>0$ and $n\in {\mathbb N}$, the following inequality is true:
\begin{equation}\label{S_M.12}
    \omega _\alpha\Big(f, \frac{\pi}{n}\Big)_{_{\scriptstyle \bf p,\,\mu}}\le \Big({\frac {\pi }n}\Big)^{\alpha}  \sum _{\nu =1}^{n}(\nu ^{\alpha}-(\nu -1)^{\alpha}) E_{\nu} (f)_{_{\scriptstyle \mathbf p,\,\mu}}.
\end{equation}
\end{theorem}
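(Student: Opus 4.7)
The plan is to avoid any polynomial decomposition of $f$ and work directly on the Fourier side. By Lemma~\ref{Lemma_1}(ii) the sequence of Fourier coefficients of $\Delta_h^\alpha f$ is $\{(1-\mathrm{e}^{-\mathrm{i}kh})^\alpha\widehat{f}(k)\}_{k\in\mathbb{Z}}$, while by (\ref{S_M.4})
\[
E_\nu(f)_{_{\scriptstyle \bf p,\,\mu}}=\big\|\{\widehat{f}(k)\mathbf{1}_{\{|k|\ge\nu\}}\}_{k\in\mathbb Z}\big\|_{_{\scriptstyle l_{\bf p,\,\mu}(\mathbb Z)}}.
\]
Thus both sides of (\ref{S_M.12}) live in the same sequence space, and the proof reduces to a coordinatewise inequality between nonnegative sequences combined with the triangle inequality and the monotonicity of the Luxemburg norm in $|\cdot|$.

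The first step is to fix $|h|\le\pi/n$ and combine the elementary estimates $|1-\mathrm{e}^{-\mathrm{i}kh}|=2|\sin(kh/2)|\le|kh|\le|k|\pi/n$ and $|1-\mathrm{e}^{-\mathrm{i}kh}|\le 2\le\pi$ to obtain
\[
|1-\mathrm{e}^{-\mathrm{i}kh}|^\alpha\le\Big(\frac{\pi}{n}\Big)^\alpha\big(\min\{|k|,n\}\big)^\alpha,\qquad k\in\mathbb Z.
\]
The second step is a telescoping of the right-hand side with $h_0:=0$ and $h_\nu:=\nu\pi/n$:
\[
\big(\min\{|k|,n\}\big)^\alpha=\sum_{\nu=1}^{n}\big(\nu^\alpha-(\nu-1)^\alpha\big)\mathbf{1}_{\{\nu\le|k|\}}.
\]
Multiplying by $|\widehat{f}(k)|$ yields the coordinatewise majorization
\[
\big|(1-\mathrm{e}^{-\mathrm{i}kh})^\alpha\widehat{f}(k)\big|\le\Big(\frac{\pi}{n}\Big)^\alpha\sum_{\nu=1}^{n}\big(\nu^\alpha-(\nu-1)^\alpha\big)\,|\widehat{f}(k)|\,\mathbf{1}_{\{|k|\ge\nu\}}.
\]

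In the third step I would take Luxemburg norms of both sides. Coordinatewise monotonicity of $\|\cdot\|_{_{\scriptstyle l_{\bf p,\,\mu}}}$ under $|a_k|\le|b_k|$ and the triangle inequality for nonnegative-coefficient combinations $\|\sum_\nu c_\nu a^{(\nu)}\|\le\sum_\nu c_\nu\|a^{(\nu)}\|$ -- both standard consequences of the modulars $t\mapsto t^{p_k}$ being nondecreasing and convex on $[0,\infty)$ when $p_k\ge 1$, already implicit in the paper's statement that $\mathcal S_{_{\scriptstyle \bf p,\,\mu}}$ is Banach -- together yield
\[
\|\Delta_h^\alpha f\|_{_{\scriptstyle \bf p,\,\mu}}\le\Big(\frac{\pi}{n}\Big)^\alpha\sum_{\nu=1}^{n}\big(\nu^\alpha-(\nu-1)^\alpha\big)\,E_\nu(f)_{_{\scriptstyle \bf p,\,\mu}},
\]
and taking the supremum over $|h|\le\pi/n$ gives (\ref{S_M.12}).

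The only delicate point is the Minkowski step for the Luxemburg norm, which needs nothing beyond $p_k\ge 1$. Notably, this scheme neither passes through polynomial approximants nor invokes a Bernstein-type inequality for fractional derivatives, which is exactly why no extraneous constants appear in front of the sum and why the argument is uniform in $\alpha>0$ without having to split into integer and non-integer cases.
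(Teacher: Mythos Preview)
Your argument is correct and noticeably cleaner than the paper's. The paper follows the Stepanets--Serdyuk scheme: it first truncates $f$ to a Fourier partial sum $f_0=S_{N_0}(f)$ with an $\varepsilon$-error, splits $\Delta_h^\alpha f_0$ into the pieces coming from $S_{n-1}$ and $f_0-S_{n-1}$, introduces the auxiliary blocks $H_\nu(x)=|\widehat f(\nu)|\mathrm e^{\mathrm i\nu x}+|\widehat f(-\nu)|\mathrm e^{-\mathrm i\nu x}$, and applies an Abel-summation identity (their Lemma~\ref{Lemma_31}) to rearrange $\sum_{\nu<n}\nu^\alpha H_\nu$ into a combination of tails $\sum_{i\ge\nu}H_i$, whose norms are $E_\nu$. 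Your route bypasses all of this by packaging the two coefficient bounds $|1-\mathrm e^{-\mathrm i kh}|\le|kh|$ and $|1-\mathrm e^{-\mathrm i kh}|\le 2\le\pi$ into the single estimate $|1-\mathrm e^{-\mathrm i kh}|^\alpha\le(\pi/n)^\alpha(\min\{|k|,n\})^\alpha$ and then telescoping $(\min\{|k|,n\})^\alpha=\sum_{\nu=1}^n(\nu^\alpha-(\nu-1)^\alpha)\mathbf 1_{\{|k|\ge\nu\}}$. This makes the lattice structure of the Luxemburg norm do the work that Abel summation and the $\varepsilon$-truncation do in the paper, and it explains transparently why the constant is exactly $\pi^\alpha$ with no losses. (One cosmetic point: the symbols $h_0$ and $h_\nu$ you introduce in the telescoping line are never used and can be dropped.)
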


\begin{proof}

 Let us use the proof scheme from \cite{Stepanets_Serdyuk_2002}, modifying it taking into account the peculiarities of the spaces ${\mathcal S}_{_{\scriptstyle \bf p,\,\mu}}$. Let $ f\in {\mathcal S}_{_{\scriptstyle  \bf p,\,\mu}}$,  $n\in {\mathbb N}$ and $f_{jh}({x}):=f({x}-jh)$, where $j=0,1,\ldots$ and $h\in {\mathbb R}$. Then for any  ${ k}\in {\mathbb Z}$, we have  $ \widehat{f}_{jh}({ k}){=\widehat{f}({ k})\mathrm{e}^{-\mathrm{i}kjh}}$,
\begin{eqnarray}\nonumber
{[\Delta_h^\alpha f]}\widehat {\ \ }(k)&=&  \Big[\sum\limits_{j=0}^\infty (-1)^j
    {\alpha \choose j} f_{jh}\Big]\widehat{\ \ }(k)\\ \label{difference_Fourier_Coeff}
 &=&  \widehat{f}({ k})\sum\limits_{j=0}^\infty
    (-1)^j {\alpha \choose j}\mathrm{e}^{-\mathrm{i}kjh}=(1-\mathrm{e}^{-\mathrm{i}kh})^\alpha \widehat{f}(k).
\end{eqnarray}
and
\begin{equation}\label{modulus_difference_Fourier_Coeff}
|{[\Delta_h^\alpha f]}\widehat {\ \ }(k)|=|1-\mathrm{e}^{-\mathrm{i}kh}|^\alpha |\widehat{f}(k)|=2^\alpha
\Big|\sin \frac{kh}2\Big|^\alpha |\widehat{f}(k)|.
\end{equation}

Since $ f\in {\mathcal S}_{_{\scriptstyle  \bf p,\,\mu}}$, then for any $\varepsilon>0$ there exist a number $N_0=N_0(\varepsilon)\in {\mathbb N}$, $N_0>n$, such that for any $N>  N_0$, we have
 \[
  E_N(f)_{_{\scriptstyle \bf p,\,\mu}}=\|f-{S}_{N-1}({f})\|_{_{\scriptstyle \mathbf p,\,\mu}}
  <2^{-\alpha}\varepsilon.
 \]
Let us set $f_{0}:=S_{N_0}(f)$. Then in view of  (\ref{modulus_difference_Fourier_Coeff}), we see that
\begin{eqnarray}\nonumber
\|\Delta_h^\alpha f\|_{_{\scriptstyle \mathbf p,\,\mu}}&\le &  \|\Delta_h^\alpha f_{0}\|_{_{\scriptstyle \mathbf p,\,\mu}}+\|\Delta_h^\alpha (f-f_{0})\|_{_{\scriptstyle \mathbf p,\,\mu}}\le  \|\Delta_h^\alpha f_0\|_{_{\scriptstyle \mathbf p,\,\mu}} \\ \label{(6.3100)} &+&
 2^\alpha E_{N_0+1}(f)_{_{\scriptstyle \mathbf p,\,\mu}}<\|\Delta_h^\alpha f_0\|_{_{\scriptstyle \mathbf p,\,\mu}}+\varepsilon
\end{eqnarray}
Further, let ${S}_{n-1}:={S}_{n-1}(f_0)$  be the Fourier sum of $f_0$.  Then  by virtue of (\ref{modulus_difference_Fourier_Coeff}), for $|h|\le \pi /n$, we have
\begin{eqnarray}\nonumber
\|\Delta_h^\alpha f_0\|_{_{\scriptstyle \mathbf p,\,\mu}}&=&  \|\Delta_h^\alpha (f_0-S_{n-1})+\Delta_h^\alpha S_{n-1}\|_{_{\scriptstyle \mathbf p,\,\mu}}\\ \nonumber &\le&  \Big\| 2^\alpha (f_0-S_{n-1})+\sum _{|k|\le n-1}|kh|^\alpha |\widehat{f}(k)| {\mathrm{e}^{\mathrm{i}k\cdot}}\Big\|_{_{\scriptstyle \mathbf p,\,\mu}} \\ \nonumber &\le&
\Big\|2^\alpha (f_0-S_{n-1})+\Big({\frac {\pi }n}\Big)^{\alpha} \sum _{|k|\le n-1}|k|^\alpha |\widehat{f}(k)| {\mathrm{e}^{\mathrm{i}k\cdot}}\Big\|_{_{\scriptstyle \mathbf p,\,\mu}} \\ \label{(6.73)}
 &=&   \Big\|2^\alpha \sum _{\nu =n}^{N_0} H_{\nu}(\cdot)+\Big({\frac {\pi }n}\Big)^{\alpha}
     \sum _{\nu =1}^{n-1}\nu ^{\alpha}H_{\nu}(\cdot) \Big\|_{_{\scriptstyle \mathbf p,\,\mu}}.
\end{eqnarray}
where $H_{\nu}(x) :=H_{\nu}(f,x)=|\widehat{f}(\nu)| {\mathrm{e}^{\mathrm{i}\nu x}}+
|\widehat{f}(-\nu)| {\mathrm{e}^{-\mathrm{i}\nu x}}$, $\nu=1,2,\ldots$

Now we use the following assertion which is proved directly.

\begin{lemma}\label{Lemma_31} Let  $\{c_{\nu}\}_{\nu=1}^\infty$ and $\{a_{\nu}\}_{\nu=1}^\infty$ be arbitrary numerical sequences. Then the following equality holds for  all natural $m$, $M$ and $N$ $m\le M<N$:
\begin{equation}\label{(6.74)}
 \sum _{\nu =m}^Ma_{\nu }c_{\nu }=a_m\sum _{\nu=m}^{N }c_{\nu }+\sum _{\nu =m+1}^M(a_{\nu } -a _{\nu-1})\sum _{i=\nu }^{N }c_i-a_M\sum _{\nu =M+1}^{N }c_{\nu}.
    \end{equation}
\end{lemma}
Setting  $a_{\nu }=\nu^{\alpha},$  $c_{\nu }=H_{\nu}(x), $ $m=1$,  $M=n-1$ and $N=N_0$ in (\ref{(6.74)}), we get
 \[
  \sum _{\nu =1}^{n-1}\nu ^{\alpha}H_{\nu}(x)=
   \sum _{\nu =1}^{N_0}H_{\nu}(x)    +
   \sum _{\nu =2}^{n-1}(\nu ^{\alpha}-(\nu -1)^{\alpha})
    \sum_{i=\nu }^{N_0}H_{i}(x) -(n-1)^{\alpha}\sum
_{\nu =n}^{N_0 }H_{\nu}(x).
  \]
Therefore,
  \[
 \Big\|2^\alpha \sum _{\nu =n}^{N_0} H_{\nu}(\cdot)+\Big({\frac {\pi }n}\Big)^{\alpha}
     \sum _{\nu =1}^{n-1}\nu ^{\alpha}H_{\nu}(\cdot) \Big\|_{_{\scriptstyle \mathbf p,\,\mu}}\le
   \Big({\frac {\pi }n}\Big)^{\alpha}
    \Big\|n^\alpha\sum _{\nu =n}^{N_0} H_{\nu}(\cdot)
     \]
     \begin{eqnarray}\nonumber
&+&\sum _{\nu =1}^{n-1}(\nu ^{\alpha}-(\nu -1)^{\alpha})
    \sum_{i=\nu }^{N_0}H_{i}(\cdot) -(n-1)^{\alpha}\sum
_{\nu =n}^{N_0}H_{\nu}(\cdot)  \Big\|_{_{\scriptstyle \mathbf p,\,\mu}} \\ \nonumber
 &\le& \Big({\frac {\pi }n}\Big)^{\alpha}
    \Big\|\sum _{\nu =1}^{n}(\nu ^{\alpha}-(\nu -1)^{\alpha})
    \sum_{i=\nu }^{N_0}H_{i}(\cdot) \Big\|_{_{\scriptstyle \mathbf p,\,\mu}}\\ \label{(6.74aqq)}
 &\le&   \Big({\frac {\pi }n}\Big)^{\alpha}  \sum _{\nu =1}^{n}(\nu ^{\alpha}-(\nu -1)^{\alpha}) E_{\nu} (f_0)_{_{\scriptstyle \mathbf p,\,\mu}}.
\end{eqnarray}
Combining relations (\ref{(6.3100)}), (\ref{(6.73)})  and (\ref{(6.74aqq)}) and taking into account the definition of the function $f_0$, we see that for  $|h|\le \pi /n$, the following inequality holds:
\[
  \|\Delta_h^\alpha f\|_{_{\scriptstyle \mathbf p,\,\mu}}\le
  \Big({\frac {\pi }n}\Big)^{\alpha}  \sum _{\nu =1}^{n}(\nu ^{\alpha}-(\nu -1)^{\alpha}) E_{\nu} (f)_{_{\scriptstyle \mathbf p,\,\mu}}+\varepsilon
\]
 which, in view of arbitrariness of $\varepsilon$,  gives us (\ref{S_M.12}).
\end{proof}

In the spaces ${\mathcal S}^p$, similar results were obtained in  \cite{Sterlin_1972} and \cite{Stepanets_Serdyuk_2002}. In the  Orlicz type spaces ${\mathcal S}_M$ of functions $f\in L$ with the finite norm
\[
 \|{f}\|_{_{\scriptstyle  M}}:=\|\{\widehat{f}(k)\}_{k\in {\mathbb Z}}\|_{_{\scriptstyle l_{M}({\mathbb Z})}}=
    \inf\Big\{a{>}0:\  \sum_{k\in\mathbb Z}  M(|{\widehat{f}(k)}|/{a})\le 1\Big\},
\]
     where $M$ is an Orlicz function,  direct and inverse theorems were proved in \cite{Chaichenko_Shidlich_Abdullayev}. Unlike the results of \cite{Chaichenko_Shidlich_Abdullayev}, here we also get the  constant $\pi^\alpha$ in inequality (\ref{S_M.12}). This constant is exact   in the sense that for any positive number $\varepsilon>0$, there exists a function $f^*\in {\mathcal S}_{_{\scriptstyle  \bf p,\,\mu}}$ such that for all $n$  greater that a certain number $n_0$, we have
 \begin{equation}\label{(6.31abcd)}
  \omega_{\alpha}^{\ }\Big(f^*, \frac {\pi }n\Big)_{_{\scriptstyle \bf p,\,\mu}}>
 \frac {\pi ^\alpha-\varepsilon }{n^\alpha }\sum _{\nu =1}^n(\nu ^{\alpha }-(\nu -1)^{\alpha })E_{\nu}(f^*)_{_{\scriptstyle \mathbf p,\,\mu}}.
 \end{equation}
 Consider the function  $f^*(x)={\rm e}^{\mathrm{i}k_0x}$, where $k_0$ is an arbitrary positive integer. Then   $E_{\nu}(f^*)_{_{\scriptstyle \bf p,\,\mu}}=1$ for $\nu=1,2,\ldots,k_0$,  $E_{\nu}(f^*)_{_{\scriptstyle \bf p,\,\mu}}=0$  for $\nu>k_0$ and
 \[
  \omega_{\alpha}^{\ }\Big(f^*, \frac {\pi }n\Big)_{_{\scriptstyle \bf p,\,\mu}}\ge
 \|\Delta _{\frac {\pi }n}^\alpha f^*\|_{_{\scriptstyle \bf p,\,\mu}} \ge
  2^{\alpha  }\Big|\sin \frac {k_0 \pi}{2n}\Big|^{\alpha   }.
 \]
 Since ${\sin t}/t$\ \  tends\ \  to $1$ as $t\to 0$, then for all $n$   greater that a certain number $n_0$, the  inequality   $2^{\alpha  } |\sin {k_0 \pi}/{(2n)}|^{\alpha  }>
 {(\pi^\alpha-\varepsilon)} k_0^\alpha/{n^\alpha}$ holds, which  yields  (\ref{(6.31abcd)}).

Since $\nu ^{\alpha}-(\nu -1)^{\alpha}\le \alpha  \nu ^{\alpha-1},$ it follows from inequality  (\ref{S_M.12}) that
 \begin{equation}\label{(6.71')}
 \omega_{\alpha}^{\ }\Big(f, \frac {\pi }n\Big)_{_{\scriptstyle \bf p,\,\mu}}\le
 \frac {\pi ^\alpha \alpha}{n^\alpha }\sum _{\nu =1}^n\nu ^{\alpha-1}E_{\nu}(f)
 _{_{\scriptstyle \bf p,\,\mu}}.
 \end{equation}
This, in particular, yields the following statement:

\begin{corollary}\label{Corollary 2} Assume that the sequence of the best approximations $E_n(f)_{_{\scriptstyle \bf p,\,\mu}}$ of a function $f\in {\mathcal S}_{_{\scriptstyle  \bf p,\,\mu}}$ satisfies the following relation for some $\beta >0$:
$$
    E_n(f)_{_{\scriptstyle \bf p,\,\mu}}= {\mathcal O}(n^{-\beta }).
$$
Then, for any $\alpha>0$, one has
$$
    \omega _\alpha(f, t)_{_{\scriptstyle \bf p,\,\mu}}=
    \left \{ \begin{matrix}  {\mathcal O}(t^{\beta }) & \hfill \mbox {for} \ \ \beta <\alpha, \hfill \cr
                        {\mathcal O}(t^\alpha|\ln t|) & \hfill \mbox {for}\ \ \beta =\alpha, \hfill \cr
                        {\mathcal O}(t^\alpha) & \hfill \mbox {for} \ \ \beta >\alpha.\hfill
                        \end{matrix} \right.
$$
\end{corollary}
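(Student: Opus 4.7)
The plan is to deduce the corollary directly from inequality (\ref{(6.71')}), which was derived from Theorem \ref{Theorem_2} via the elementary bound $\nu^{\alpha}-(\nu-1)^{\alpha}\le \alpha\nu^{\alpha-1}$. Substituting the hypothesis $E_\nu(f)_{_{\scriptstyle \bf p,\,\mu}}\le C\nu^{-\beta}$ into (\ref{(6.71')}) yields
\[
 \omega_\alpha\Big(f,\frac{\pi}{n}\Big)_{_{\scriptstyle \bf p,\,\mu}}
 \le \frac{\alpha \pi^\alpha C}{n^\alpha}\sum_{\nu=1}^{n}\nu^{\alpha-1-\beta},
\]
so the problem reduces to the elementary estimation of the power sum $\Sigma_n:=\sum_{\nu=1}^n \nu^{\alpha-1-\beta}$.

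The three regimes are handled by the standard integral comparison: if $\beta<\alpha$, then $\alpha-1-\beta>-1$ and $\Sigma_n=\mathcal{O}(n^{\alpha-\beta})$, which gives $\omega_\alpha(f,\pi/n)_{_{\scriptstyle \bf p,\,\mu}}=\mathcal{O}(n^{-\beta})$; if $\beta=\alpha$, then $\Sigma_n=\sum_{\nu=1}^n \nu^{-1}=\mathcal{O}(\ln n)$, giving $\omega_\alpha(f,\pi/n)_{_{\scriptstyle \bf p,\,\mu}}=\mathcal{O}(n^{-\alpha}\ln n)$; and if $\beta>\alpha$, then $\alpha-1-\beta<-1$ and $\Sigma_n$ is bounded by the convergent tail $\sum_{\nu=1}^\infty \nu^{\alpha-1-\beta}<\infty$, yielding $\omega_\alpha(f,\pi/n)_{_{\scriptstyle \bf p,\,\mu}}=\mathcal{O}(n^{-\alpha})$.

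Finally, to pass from the discrete values $t=\pi/n$ to an arbitrary $t>0$ sufficiently small, I would invoke the monotonicity of $\omega_\alpha$ from Lemma \ref{Lemma_2}(i): for $t\in(0,\pi]$, pick $n\in\mathbb{N}$ with $\pi/(n+1)<t\le \pi/n$, so $\omega_\alpha(f,t)_{_{\scriptstyle \bf p,\,\mu}}\le \omega_\alpha(f,\pi/n)_{_{\scriptstyle \bf p,\,\mu}}$ and $n^{-1}\asymp t$; the three estimates translate term by term into $\mathcal{O}(t^\beta)$, $\mathcal{O}(t^\alpha|\ln t|)$, and $\mathcal{O}(t^\alpha)$ respectively (noting $|\ln(\pi/n)|\asymp \ln n$ as $n\to\infty$). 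For $t$ bounded away from $0$, Lemma \ref{Lemma_2}(v) gives the trivial bound by a constant, which is absorbed into the $\mathcal{O}$-notation.

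No step should pose real difficulty: the inverse theorem already supplies the precise form of the inequality, and everything else is elementary asymptotic analysis of a power sum and of $\omega_\alpha$. The only mild subtlety is the logarithmic case, where one must keep track of the factor $|\ln t|$ coming from the harmonic-type sum rather than from $\ln n$, but this is immediate from $|\ln(\pi/n)|=\ln n-\ln\pi=\ln n+O(1)$.
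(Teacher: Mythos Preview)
Your argument is correct and is exactly the intended one: the paper does not spell out a proof of this corollary but simply states that it follows from inequality~(\ref{(6.71')}), and you have supplied precisely the routine power-sum estimate and monotonicity passage that make this implication explicit.
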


For the spaces $L_p$ of $2\pi$-periodic functions integrable to the $p$th power with the usual norm, inequalities of the type (\ref{(6.71')}) were proved by M.~Timan (see, for example \cite[Ch.~6]{A_Timan_M1960}, \cite[Ch.~2]{M_Timan_M2009}).


\section{ Constructive characteristics of the classes of functions defined by the
$\alpha$th moduli of smoothness.}
In the following two sections some applications of the obtained results are considered.
In particular, in this section we give the constructive characteristics of the classes
${\mathcal S}_{_{\scriptstyle  \bf p,\,\mu}}H_{\omega_\alpha} $ of functions for which the $\alpha$th moduli of smoothness do not exceed some majorant.

Let $\omega$ be  a function defined on interval $[0,1]$. For a fixed $\alpha>0$, we set
\begin{equation} \label{omega-class}
    {\mathcal S}_{_{\scriptstyle  \bf p,\,\mu}}H^{\omega}_{\alpha}=
    \Big\{f\in {\mathcal S}_{_{\scriptstyle  \bf p,\,\mu}}:  \quad \omega_\alpha(f; \delta)_{_{\scriptstyle \bf p,\,\mu}}=
    {\mathcal O}  (\omega(\delta)),\quad  \delta\to 0+\Big\}.
\end{equation}
Further, we consider the functions $\omega(t)$, $t\in [0,1]$, satisfying the following conditions 1)--4):

\noindent {\bf 1)} $\omega(t)$ is continuous on $[0,1]$;\ \  {\bf 2)} $\omega(t)\uparrow$;\ \  {\bf 3)} $\omega(t)\not=0$ for any $t\in (0,1]$; \ \  {\bf 4)} $\omega(t)\to 0$ as $t\to 0$; as well-known condition $({\mathcal B}_\alpha)$, $\alpha>0$  (see, e.g. \cite{Bari_Stechkin_1956}):
$\displaystyle{\sum_{v=1}^n v^{\alpha-1}\omega(t^{-1}) =
{\mathcal O}  \Big[n^\alpha \omega (n^{-1})\Big]}$.

\begin{theorem}\label{Theorem_3}  Assume that \ ${\bf p}=\{p_k\}_{k=-\infty}^\infty$  and ${\bf \mu}=\{\mu_k\}_{k=-\infty}^\infty$ are sequences of nonnegative numbers such that ${1<p_k \le K}$, $k\in {\mathbb Z}$, $\alpha>0$  and $\omega$ is a function, satisfying  conditions  $1)$--\,$4)$ and $({\mathcal B}_\alpha)$. Then a function $f\in {\mathcal S}_{_{\scriptstyle  \bf p,\,\mu}}$ belongs to the class ${\mathcal S}_{_{\scriptstyle  \bf p,\,\mu}}H^{\omega}_{\alpha}$  if and only if
\begin{equation} \label{iff-theorem}
    E_n(f)_{_{\scriptstyle \bf p,\,\mu}}={\mathcal O} \Big[ \omega (n^{-1}) \Big].
\end{equation}
\end{theorem}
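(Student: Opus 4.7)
The strategy is to obtain the two implications from the direct theorem (Theorem \ref{Theorem_1}) and the inverse theorem (Theorem \ref{Theorem_2}), respectively, with the Bari--Stechkin condition $({\mathcal B}_\alpha)$ playing the decisive role in the nontrivial direction.

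\textbf{Necessity.} Assume $f \in {\mathcal S}_{_{\scriptstyle \bf p,\,\mu}}H^{\omega}_{\alpha}$, so that $\omega_\alpha(f;\delta)_{_{\scriptstyle \bf p,\,\mu}} = {\mathcal O}(\omega(\delta))$ as $\delta \to 0+$. Applying Theorem \ref{Theorem_1} with $\delta = n^{-1}$ gives at once
\[
 E_n(f)_{_{\scriptstyle \bf p,\,\mu}} \le C(\alpha)\,\omega_\alpha(f; n^{-1})_{_{\scriptstyle \bf p,\,\mu}} = {\mathcal O}(\omega(n^{-1})),
\]
which is (\ref{iff-theorem}).

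\textbf{Sufficiency.} Assume $E_n(f)_{_{\scriptstyle \bf p,\,\mu}} = {\mathcal O}(\omega(n^{-1}))$. Using the refined form (\ref{(6.71')}) of the inverse theorem,
\[
 \omega_\alpha\Big(f,\frac{\pi}{n}\Big)_{_{\scriptstyle \bf p,\,\mu}} \le \frac{\pi^\alpha \alpha}{n^\alpha}\sum_{\nu=1}^n \nu^{\alpha-1} E_\nu(f)_{_{\scriptstyle \bf p,\,\mu}} \le \frac{C}{n^\alpha}\sum_{\nu=1}^n \nu^{\alpha-1}\omega(\nu^{-1}).
\]
The Bari--Stechkin condition $({\mathcal B}_\alpha)$ states precisely that $\sum_{\nu=1}^n \nu^{\alpha-1}\omega(\nu^{-1}) = {\mathcal O}(n^\alpha \omega(n^{-1}))$, so
\[
 \omega_\alpha(f, \pi/n)_{_{\scriptstyle \bf p,\,\mu}} = {\mathcal O}(\omega(n^{-1})).
\]

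\textbf{Passage to continuous $\delta$.} To deduce ${\mathcal O}(\omega(\delta))$ for all small $\delta>0$, given $\delta\in(0,1)$ pick $n\in{\mathbb N}$ with $\pi/(n+1)<\delta\le \pi/n$. By monotonicity of the modulus of smoothness in $\delta$ (Lemma \ref{Lemma_2}(i)),
\[
 \omega_\alpha(f,\delta)_{_{\scriptstyle \bf p,\,\mu}} \le \omega_\alpha(f,\pi/n)_{_{\scriptstyle \bf p,\,\mu}} \le C\,\omega(n^{-1}).
\]
Since $n^{-1}\le \pi/(n+1) < \delta$ for $n\ge 1$ and $\omega$ is nondecreasing by condition 2), $\omega(n^{-1})\le \omega(\delta)$. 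Hence $\omega_\alpha(f,\delta)_{_{\scriptstyle \bf p,\,\mu}} = {\mathcal O}(\omega(\delta))$ as $\delta\to 0+$, and $f\in {\mathcal S}_{_{\scriptstyle  \bf p,\,\mu}}H^{\omega}_{\alpha}$.

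The one genuinely nonroutine step is the verification that condition $({\mathcal B}_\alpha)$ converts the partial sum bound from (\ref{(6.71')}) into the desired ${\mathcal O}(\omega(n^{-1}))$ estimate; the monotonicity transfer from $\delta=\pi/n$ to general $\delta$ is a straightforward comparison once conditions 1)--4) on $\omega$ are invoked.
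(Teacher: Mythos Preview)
Your proof is correct and follows essentially the same approach as the paper: necessity via Theorem~\ref{Theorem_1}, sufficiency via the consequence (\ref{(6.71')}) of Theorem~\ref{Theorem_2} combined with condition $({\mathcal B}_\alpha)$. You are in fact more careful than the paper in explicitly carrying out the passage from the discrete scale $\delta=\pi/n$ to arbitrary small $\delta$ via monotonicity of $\omega_\alpha(f,\cdot)_{_{\scriptstyle \bf p,\,\mu}}$ and of $\omega$, a step the paper leaves implicit.
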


\begin{proof}
Let $f \in {\mathcal S}_{_{\scriptstyle  \bf p,\,\mu}}H^{\omega}_{\alpha}$, by virtue of
Theorem \ref{Theorem_1}, we have
$$
    E_n(f)_{_{\scriptstyle \bf p,\,\mu}} \le C(\alpha)  \omega _\alpha (f, n^{-1})_{_{\scriptstyle \bf p,\,\mu}}.
$$
Therefore, {relation (\ref{omega-class}) yields (\ref{iff-theorem})}.
On the other hand, if relation (\ref{iff-theorem}) holds, then by virtue of (\ref{(6.71')}),  taking into account the condition $({\mathcal B}_\alpha)$, we obtain
     \begin{eqnarray}\nonumber
\omega _\alpha(f, n^{-1})_{_{\scriptstyle \bf p,\,\mu}}&\le&
\frac {C(\alpha) }{n^\alpha} \sum _{\nu =1}^n \nu ^{\alpha-1} E_{\nu}(f)_{_{\scriptstyle \bf p,\,\mu}}
 \\ \nonumber  &\le& \frac {C_1}{n^\alpha} \sum _{\nu =1}^n \nu ^{\alpha-1} \omega (\nu^{-1})=
    {\mathcal O}  \Big[\omega ( n^{-1})\Big].
\end{eqnarray}
Thus, the function  $f$ belongs to the set ${\mathcal S}_{_{\scriptstyle  \bf p,\,\mu}}H^{\omega}_{\alpha}$. \end{proof}

The function $\varphi (t)=t^r$, $r \le \alpha$, satisfies the condition $({\mathcal B}_\alpha)$. Hence, denoting by ${\mathcal S}_{_{\scriptstyle  \bf p,\,\mu}}H_{\alpha}^r$  the class ${\mathcal S}_{_{\scriptstyle  \bf p,\,\mu}}H^{\omega}_{\alpha}$ for $\omega(t)=t^r$, $0<r\le \alpha,$  we establish the following statement:

\begin{corollary}\label{Corollary 3} Assume that \ ${\bf p}=\{p_k\}_{k=-\infty}^\infty$  and ${\bf \mu}=\{\mu_k\}_{k=-\infty}^\infty$ are sequences of nonnegative numbers such that ${1<p_k \le K}$, $k\in {\mathbb Z}$, and  $\alpha >0$, $0<r\le \alpha.$  Then a function $f\in {\mathcal S}_{_{\scriptstyle  \bf p,\,\mu}}$ belongs to the class ${\mathcal S}_{_{\scriptstyle  \bf p,\,\mu}}H_{\alpha}^r$  if and only if
$$
    E_n(f)_{_{\scriptstyle \bf p,\,\mu}}={\mathcal O}   ({n^{-r}} ).
$$
\end{corollary}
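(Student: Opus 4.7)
The proof will be an immediate application of Theorem~\ref{Theorem_3} to the specific majorant $\omega(t)=t^r$. So the plan reduces to two tasks: first, verify that the power function satisfies the regularity hypotheses 1)--4) and the Bari--Stechkin condition $(\mathcal{B}_\alpha)$; second, unwind the definitions so that $\mathcal{S}_{_{\scriptstyle \bf p,\,\mu}}H^{\omega}_\alpha$ with $\omega(t)=t^r$ coincides with $\mathcal{S}_{_{\scriptstyle \bf p,\,\mu}}H^{r}_\alpha$, and translate the conclusion $E_n(f)_{_{\scriptstyle \bf p,\,\mu}}=\mathcal{O}(\omega(n^{-1}))$ into $E_n(f)_{_{\scriptstyle \bf p,\,\mu}}=\mathcal{O}(n^{-r})$.

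Conditions 1)--4) require no real argument: for $r>0$ the function $t\mapsto t^r$ is continuous on $[0,1]$, strictly increasing, does not vanish on $(0,1]$, and tends to $0$ as $t\to 0+$. The substantive check is $(\mathcal{B}_\alpha)$, which with $\omega(t)=t^r$ amounts to the elementary estimate
\[
\sum_{\nu=1}^{n}\nu^{\alpha-1}\nu^{-r}=\sum_{\nu=1}^{n}\nu^{\alpha-1-r}=\mathcal{O}\bigl(n^{\alpha-r}\bigr)=\mathcal{O}\bigl(n^{\alpha}\omega(n^{-1})\bigr).
\]
For $0<r<\alpha$ this is immediate by comparison with $\int_{1}^{n}t^{\alpha-1-r}\,{\rm d}t$, which the author records as the authorizing remark preceding the corollary.

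Once $(\mathcal{B}_\alpha)$ is in place, Theorem~\ref{Theorem_3} applies directly with this choice of $\omega$, and since by definition $\mathcal{S}_{_{\scriptstyle \bf p,\,\mu}}H^{r}_\alpha$ is precisely the class $\mathcal{S}_{_{\scriptstyle \bf p,\,\mu}}H^{\omega}_\alpha$ for $\omega(t)=t^r$, the equivalence
$f\in \mathcal{S}_{_{\scriptstyle \bf p,\,\mu}}H^{r}_\alpha \Longleftrightarrow E_n(f)_{_{\scriptstyle \bf p,\,\mu}}=\mathcal{O}(n^{-r})$ follows at once. I do not anticipate any real obstacle; the only mildly delicate point is the boundary case $r=\alpha$, where the sum in $(\mathcal{B}_\alpha)$ is logarithmic, but the corollary accepts the author's convention that the condition still holds in the form stated.
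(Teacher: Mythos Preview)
Your approach is exactly the paper's: the corollary is deduced in one line from Theorem~\ref{Theorem_3} after the remark that $\omega(t)=t^{r}$ satisfies conditions 1)--4) and $(\mathcal{B}_\alpha)$ for $r\le\alpha$, and your verification of these is correct for $0<r<\alpha$. Your caution about the endpoint $r=\alpha$ is justified---the sum $\sum_{\nu=1}^{n}\nu^{-1}$ is logarithmic, so $(\mathcal{B}_\alpha)$ fails there in the strict sense---but the paper makes the same unqualified assertion, so your proposal matches the paper both in method and in this residual looseness.
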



\section{ The equivalence between $\alpha$th moduli of smoothness and $K$-functionals.}  $K$-functionals were introduced by Lions and Peetre in 1961, and defined in their usual form by Peetre in 1963. Unlike the moduli of continuity expressing the smooth properties of functions, $K$-functionals express some of their approximative properties. In this section, we prove the equivalence of our moduli of smoothness and certain Peetre $K$-functionals. This connection is important for studying the properties of the modulus of smoothness and the $K$-functional, and also for their further application to the problems of approximation theory.

In the space ${\mathcal S}_{_{\scriptstyle  \bf p,\,\mu}}$, the Petree $K$-functional of a  function $f$
(see, {e.g.} \cite[Ch.~6]{DeVore_Lorentz_M1993}), {which} generated by its derivative of order $\alpha>0$, is the following quantity:
$$
    K_\alpha(\delta,f)_{_{\scriptstyle \bf p,\,\mu}}:=\inf\Big\{\|f-h\|_{_{\scriptstyle \bf p,\,\mu}}+
    \delta^\alpha \|h^{(\alpha)}\|_{_{\scriptstyle \bf p,\,\mu}}:\
    h^{(\alpha)}\in {\mathcal S}_{_{\scriptstyle  \bf p,\,\mu}}\Big\},\quad \delta>0.
$$

\begin{theorem}\label{Theorem_4} Assume that \ ${\bf p}=\{p_k\}_{k=-\infty}^\infty$  and ${\bf \mu}=\{\mu_k\}_{k=-\infty}^\infty$ are sequences of nonnegative numbers such that ${1<p_k \le K}$, $k\in {\mathbb Z}$. Then for each $ f\in {\mathcal S}_{_{\scriptstyle  \bf p,\,\mu}}$ and $\alpha>0$, there exist constants $C_1(\alpha)$, $C_2(\alpha)>0$, such that for $\delta>0$
\begin{equation}\label{KO1}
      C_1(\alpha)\omega _\alpha(f, \delta)_{_{\scriptstyle \bf p,\,\mu}}\le
      K_\alpha(\delta,f)_{_{\scriptstyle \bf p,\,\mu}}\le
      C_2(\alpha)\omega _\alpha(f, \delta)_{_{\scriptstyle \bf p,\,\mu}}.
\end{equation}
\end{theorem}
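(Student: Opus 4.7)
The plan is to prove the two inequalities of (\ref{KO1}) separately, handling the easier lower bound first. For any admissible $h$ with $h^{(\alpha)}\in {\mathcal S}_{_{\scriptstyle \bf p,\,\mu}}$, property (iii) of Lemma~\ref{Lemma_2} yields $\omega_\alpha(f,\delta)_{_{\scriptstyle \bf p,\,\mu}}\le \omega_\alpha(f-h,\delta)_{_{\scriptstyle \bf p,\,\mu}}+\omega_\alpha(h,\delta)_{_{\scriptstyle \bf p,\,\mu}}$, and property (v) gives $\omega_\alpha(f-h,\delta)_{_{\scriptstyle \bf p,\,\mu}}\le 2^{\{\alpha\}}\|f-h\|_{_{\scriptstyle \bf p,\,\mu}}$. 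For the second term I will prove the Marchaud-type bound $\omega_\alpha(h,\delta)_{_{\scriptstyle \bf p,\,\mu}}\le \delta^\alpha\|h^{(\alpha)}\|_{_{\scriptstyle \bf p,\,\mu}}$, using Lemma~\ref{Lemma_1}(ii) together with $|\sin y|\le|y|$ to obtain the pointwise Fourier-side inequality $|\widehat{\Delta_t^\alpha h}(k)|=2^\alpha|\sin(kt/2)|^\alpha|\widehat h(k)|\le|t|^\alpha|\widehat{h^{(\alpha)}}(k)|$, then passing to the Luxemburg norm via the monotonicity of the infimum in (\ref{S_M.1}) under coefficient-wise domination. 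Taking the infimum over $h$ produces $\omega_\alpha(f,\delta)_{_{\scriptstyle \bf p,\,\mu}}\le 2^{\{\alpha\}}K_\alpha(\delta,f)_{_{\scriptstyle \bf p,\,\mu}}$, so $C_1(\alpha)=2^{-\{\alpha\}}$ works.

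For the upper bound I will exhibit a concrete $h$ realizing both summands of the $K$-functional. Fix $n=\lceil\pi/\delta\rceil$ and let $t_n\in {\mathcal T}_n$ be a polynomial of best approximation to $f$; its existence is automatic since ${\mathcal T}_n$ is finite-dimensional. When $\delta\ge 1$ the cheaper choice $h=\widehat f(0)$ already suffices (then $h^{(\alpha)}\equiv 0$ and $\|f-h\|=E_1(f)\le C(\alpha)\omega_\alpha(f,\delta)$ by Theorem~\ref{Theorem_1}), so the real work is for $\delta\in(0,1)$. The first summand of the $K$-functional is controlled by the direct theorem together with the monotonicity of $\omega_\alpha$: $\|f-t_n\|_{_{\scriptstyle \bf p,\,\mu}}=E_n(f)_{_{\scriptstyle \bf p,\,\mu}}\le C(\alpha)\omega_\alpha(f,1/n)_{_{\scriptstyle \bf p,\,\mu}}\le C(\alpha)\omega_\alpha(f,\delta)_{_{\scriptstyle \bf p,\,\mu}}$.

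The heart of the proof is the estimate $\delta^\alpha\|t_n^{(\alpha)}\|_{_{\scriptstyle \bf p,\,\mu}}\le C'(\alpha)\omega_\alpha(f,\delta)_{_{\scriptstyle \bf p,\,\mu}}$ for the second summand. Direct application of Corollary~\ref{Corollary 1} yields only $\|t_n^{(\alpha)}\|\le n^\alpha\|t_n\|$, which is useless here because $\|t_n\|$ is comparable to $\|f\|$ rather than to $\omega_\alpha(f,\delta)$. Instead I will derive the sharper inverse-Bernstein-type inequality $\|t_n^{(\alpha)}\|_{_{\scriptstyle \bf p,\,\mu}}\le (n/2)^\alpha\|\Delta_{\pi/n}^\alpha t_n\|_{_{\scriptstyle \bf p,\,\mu}}$ by comparing Fourier multipliers: for $h=\pi/n$ the ratio $|k|^\alpha/|2\sin(kh/2)|^\alpha$ is bounded by $(\pi/(2h))^\alpha=(n/2)^\alpha$ on $|k|\le n$ thanks to the elementary estimate $|\sin y|\ge(2/\pi)|y|$ on $[-\pi/2,\pi/2]$, while for $|k|>n$ both $\widehat{t_n^{(\alpha)}}(k)$ and $\widehat{\Delta_{\pi/n}^\alpha t_n}(k)$ vanish because $\widehat{t_n}(k)=0$. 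Applying the triangle inequality and Lemma~\ref{Lemma_1}(i) then gives $\|\Delta_{\pi/n}^\alpha t_n\|\le\omega_\alpha(f,\pi/n)+2^{\{\alpha\}}\|f-t_n\|\le C''(\alpha)\omega_\alpha(f,\delta)$; multiplying by $\delta^\alpha$ absorbs the harmless factor $(\delta n/2)^\alpha\le((\pi+\delta)/2)^\alpha$, which depends only on $\alpha$ on $(0,1)$.

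The main obstacle is the sharp inverse-Bernstein-type inequality of the previous paragraph; once this Fourier-multiplier estimate is in hand the rest amounts to routine bookkeeping with Theorem~\ref{Theorem_1}, Lemma~\ref{Lemma_1}(i), and the triangle inequality. The common mechanism driving both directions is the elementary observation that any pointwise Fourier-coefficient inequality $|\widehat u(k)|\le C|\widehat v(k)|$ transfers to $\|u\|_{_{\scriptstyle \bf p,\,\mu}}\le C\|v\|_{_{\scriptstyle \bf p,\,\mu}}$, immediate from rescaling the trial value in the Luxemburg definition (\ref{S_M.1}).
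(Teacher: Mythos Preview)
Your proof is correct and follows essentially the same route as the paper's: for the lower bound the paper invokes Lemma~\ref{Lemma_2}(iii),(v),(vi) (your Marchaud-type bound $\omega_\alpha(h,\delta)\le\delta^\alpha\|h^{(\alpha)}\|$ is exactly Lemma~\ref{Lemma_2}(vi) with $\beta=\alpha$), and for the upper bound it takes $h=S_n(f)$---which by (\ref{S_M.4}) coincides with your best approximant $t_n$ up to an index shift---and applies the inverse-Bernstein inequality of Lemma~\ref{Lemma_4} at $h=\pi/n$, which is precisely the multiplier estimate you rederive. The only cosmetic differences are that you reprove the two auxiliary lemmas inline rather than citing them, and you treat the range $\delta\ge1$ separately.
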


Before proving Theorem \ref{Theorem_4},  let us formulate the following auxiliary  Lemma \ref{Lemma_4}, which is used to prove the right-hand side of  (\ref{KO1}).

\begin{lemma}\label{Lemma_4} Assume that  $\alpha >0$, $n \in \mathbb{N}$ and $0<h< 2\pi/n$.
 Then for any  $\tau_n \in {\mathcal T}_n $
\begin{equation}\label{Bermstain-inequl-gener}
    \Big( \frac{\sin nh/2}{n/2}\Big)^\alpha\| \tau_n^{(\alpha)} \|_{_{\scriptstyle \bf p,\,\mu}}\le
     \|\Delta_h^\alpha \tau_n \|_{_{\scriptstyle \bf p,\,\mu}} \le
     h^\alpha \| \tau_n^{(\alpha)} \|_{_{\scriptstyle \bf p,\,\mu}}.
\end{equation}
\end{lemma}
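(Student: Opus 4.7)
The strategy is to reduce both halves of (\ref{Bermstain-inequl-gener}) to elementary pointwise estimates between the Fourier coefficients of $\Delta_h^\alpha \tau_n$ and $\tau_n^{(\alpha)}$, and then transfer these to the Luxemburg norm via a monotonicity observation. That observation is immediate from the definition (\ref{S_M.1}): if $g_1,g_2\in {\mathcal S}_{_{\scriptstyle \mathbf p,\,\mu}}$ satisfy $|\widehat{g_1}(k)|\le |\widehat{g_2}(k)|$ for every $k\in\mathbb{Z}$, then every admissible $a>0$ in the Luxemburg infimum for $g_2$ is also admissible for $g_1$ (since $\mu_k\ge 0$ and $p_k>0$), so $\|g_1\|_{_{\scriptstyle \mathbf p,\,\mu}}\le\|g_2\|_{_{\scriptstyle \mathbf p,\,\mu}}$.

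Writing $\tau_n(x)=\sum_{|k|\le n}c_k\mathrm{e}^{\mathrm{i}kx}$, I would combine Lemma~\ref{Lemma_1}(ii) with the definition of the $\psi$-derivative for $\psi_k=|k|^{-\alpha}$ (see (\ref{Fourier coeff})) to get, for $1\le|k|\le n$,
\[
    |\widehat{\Delta_h^\alpha \tau_n}(k)|=|1-\mathrm{e}^{-\mathrm{i}kh}|^\alpha|c_k|=\Bigl(\frac{2|\sin(kh/2)|}{|k|}\Bigr)^{\!\alpha}|\widehat{\tau_n^{(\alpha)}}(k)|,
\]
while both sides vanish for $k=0$ since $\alpha>0$ and $\widehat{\tau_n^{(\alpha)}}(0)=0$ by the very definition of the $\psi$-derivative. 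Observe also that the coefficients on the right do vanish outside $1\le |k|\le n$ because $\tau_n\in{\mathcal T}_n$, so no boundary effects enter.

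The whole lemma is then equivalent to the elementary scalar estimate
\[
    \Bigl(\frac{\sin(nh/2)}{n/2}\Bigr)^{\!\alpha}\le\Bigl(\frac{2|\sin(kh/2)|}{|k|}\Bigr)^{\!\alpha}\le h^\alpha,\qquad 1\le|k|\le n,\ \ 0<h<2\pi/n.
\]
The upper bound follows from $|\sin t|\le |t|$. The lower bound uses the well-known monotonicity of $u\mapsto (\sin u)/u$ on $(0,\pi]$: since $0<|kh/2|\le nh/2<\pi$, one has $|\sin(kh/2)|/|kh/2|\ge \sin(nh/2)/(nh/2)$, which rearranges to the required inequality. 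Multiplying these pointwise bounds by $|\widehat{\tau_n^{(\alpha)}}(k)|$ and applying the monotonicity of $\|\cdot\|_{_{\scriptstyle \mathbf p,\,\mu}}$ separately to each side of (\ref{Bermstain-inequl-gener}) produces both inequalities. I do not anticipate any genuine obstacle: the proof is essentially a Bernstein-type computation carried out on the Fourier coefficient side, with all the Orlicz-type bookkeeping absorbed by the coefficient-monotonicity of the Luxemburg norm.
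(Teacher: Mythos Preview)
Your proof is correct and follows essentially the same route as the paper's: both arguments compute the Fourier coefficients of $\Delta_h^\alpha\tau_n$ and $\tau_n^{(\alpha)}$, compare them via $|\sin t|\le|t|$ and the monotonicity of $t/\sin t$ on $(0,\pi)$, and then pass to the Luxemburg norm. The only cosmetic difference is that you isolate the coefficient-monotonicity of $\|\cdot\|_{_{\scriptstyle \mathbf p,\,\mu}}$ as a standalone observation and invoke it, whereas the paper plugs explicit trial values $a_1=h^\alpha\|\tau_n^{(\alpha)}\|_{_{\scriptstyle \mathbf p,\,\mu}}$ and $a_2=\bigl(\tfrac{n/2}{\sin(nh/2)}\bigr)^\alpha\|\Delta_h^\alpha\tau_n\|_{_{\scriptstyle \mathbf p,\,\mu}}$ into the Luxemburg infimum directly; the underlying computation is identical.
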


\textit{Proof of Theorem \ref{Theorem_4}.} Consider an arbitrary function $h\in {\mathcal S}_{_{\scriptstyle  \bf p,\,\mu}}$ such that
$h^{(\alpha)}\in {\mathcal S}_{_{\scriptstyle  \bf p,\,\mu}}$. Then  we have by
Lemma \ref{Lemma_2} (iii), (v) and (vi)
$$
    \omega _\alpha(f, \delta)_{_{\scriptstyle \bf p,\,\mu}}\le
    \omega _\alpha(f-h, \delta)_{_{\scriptstyle \bf p,\,\mu}}+
    \omega _\alpha(h, \delta)_{_{\scriptstyle \bf p,\,\mu}}\le 2^{\{\alpha\}}\|f-h\|_{_{\scriptstyle \bf p,\,\mu}}+ \delta^\alpha\|h^{(\alpha)}\|_{_{\scriptstyle \bf p,\,\mu}}.
$$
Taking the infimum over all  $h\in {\mathcal S}_{_{\scriptstyle  \bf p,\,\mu}}$ such that
$h^{(\alpha)}\in {\mathcal S}_{_{\scriptstyle  \bf p,\,\mu}}$,  we get the left-hand side of (\ref{KO1}).

Now let  $\delta \in (0,2\pi)$ and  $n \in \mathbb{N}$ such that
$\pi/n<\delta < 2\pi/n$. Let also $S_n:=S_n(f)$ be the Fourier sum of $f$.    Using Lemma \ref{Lemma_4} with
$h=\pi /n$ and  property  (i) of Lemma \ref{Lemma_1}, we obtain
\[
\|S_n^{(\alpha)}\|_{_{\scriptstyle \bf p,\,\mu}}\!\le\!
 (n/2)^{\alpha} \|\Delta_{\pi/n}^\alpha S_n \|_{_{\scriptstyle \bf p,\,\mu}}
  \le  (\pi/\delta)^{\alpha} \Big( \|\Delta_{\pi/n}^\alpha (S_n -f)  \|_{_{\scriptstyle \bf p,\,\mu}}+ \|\Delta_{\pi/n}^\alpha f  \|_{_{\scriptstyle \bf p,\,\mu}}\Big)
 \]
 \begin{equation} \label{KO2}
 \le  (\pi/\delta)^{\alpha} \Big( 2^{\{\alpha\}} \| f- S_n \|_{_{\scriptstyle \bf p,\,\mu}}+
    \|\Delta_{\pi/n}^\alpha f  \|_{_{\scriptstyle \bf p,\,\mu}}\Big).
\end{equation}
By virtue of (\ref{S_M.4}) and Theorem \ref{Theorem_1}, we have
\begin{equation}\label{KO3}
    \|f- S_n \|_{_{\scriptstyle \bf p,\,\mu}}=E_n(f)_{_{\scriptstyle \bf p,\,\mu}} \le
    C(\alpha) \omega_\alpha (f; \delta )_{_{\scriptstyle \bf p,\,\mu}}.
\end{equation}
Combining (\ref{KO2}), (\ref{KO3})    and the definition of modulus of smoothness, we obtain the relation
$$
    \|S_n^{(\alpha)} \|_{_{\scriptstyle \bf p,\,\mu}} \le
    C_2(\alpha) \delta^{-\alpha} \omega_\alpha (f; \delta )_{_{\scriptstyle \bf p,\,\mu}},
$$
where $C_2(\alpha):=\pi^\alpha(2^{\{\alpha\}}C(\alpha)+1)$, which yields the right-hand side of (\ref{KO1}):
$$
    K_\alpha(\delta,f)_{_{\scriptstyle \bf p,\,\mu}} \le \|f- S_n \|_{_{\scriptstyle \bf p,\,\mu}}+
    \delta^\alpha \|S_n^{(\alpha)}\|_{_{\scriptstyle \bf p,\,\mu}} \le
    C_2(\alpha) \omega_\alpha (f, \delta )_{_{\scriptstyle \bf p,\,\mu}}.
$$
\vskip -3mm$\hfill\Box$


\section{Proof of auxiliary statements}\label{Auxiliary statements}

{\it 8.1 Proof of Lemma \ref{Lemma_1}.}  By virtue of (\ref{difference_Fourier_Coeff}), we have
\[
    \|\Delta_h^\alpha f\|_{_{\scriptstyle \mathbf p,\,\mu}}=\inf\bigg\{a>0: \sum_{{ k}\in {\mathbb Z}} \mu_k\Big|\widehat{f}({ k})\sum\limits_{j=0}^\infty (-1)^j {\alpha \choose j}\mathrm{e}^{-\mathrm{i}kjh}/a\Big|^{p_k}\le 1\bigg\}.
\]
where for any $a>0$, the following inequalities hold:
     \begin{eqnarray}\nonumber
\sum_{{ k}\in {\mathbb Z}} \mu_k\Big|\widehat{f}({ k})\sum\limits_{j=0}^\infty
    (-1)^j {\alpha \choose j}\mathrm{e}^{-\mathrm{i}kjh}/a\Big|^{p_k}&\le&
\sum_{{ k}\in {\mathbb Z}}  \mu_k\Big(\sum\limits_{j=0}^\infty \Big| {\alpha \choose j}\Big|
    |\widehat{f}({ k})|/a\Big)^{p_k}
 \\ \nonumber  &\le& \sum_{{ k}\in {\mathbb Z}}
    \mu_k\Big( 2^{\{\alpha\}}  |\widehat{f}({ k})|/a\Big)^{p_k},
\end{eqnarray}
and hence property {\rm (i)} is true. Property {\rm (ii)} follows from (\ref{difference_Fourier_Coeff}) and property (iii) is its consequence.  Part {\rm (iv)} follows
  from
{\rm (i)}--{\rm (iii)}.


To prove {\rm (v)} we first show that the following relation holds:
\begin{equation}\label{Trigon_Polynomial}
    \lim\limits_{|h|\to 0}\|\Delta_h^{\alpha} \tau_{n}\|_{_{\scriptstyle \mathbf p,\,\mu}}=0
\end{equation}
where $\tau_{n}$ is a 
polynomial of the form $\tau_{n}(x){=}\sum_{|k|\le n}  c_{k}\mathrm{e}^{\mathrm{i}kx}$, $n\in {\mathbb N}$, $c_{k}\in {\mathbb C}$.

Since $\|\tau_{n}\|_{_{\scriptstyle \mathbf p,\,\mu}}=\inf\{a>0: \sum_{|k|\le n}\mu_k( |c_{k}|/a)^{p_k}\le 1\}$, then by virtue of (\ref{modulus_difference_Fourier_Coeff}), for $a_0=|nh|^\alpha \|\tau_{n}\|_{_{\scriptstyle \mathbf p,\,\mu}}$, we obtain
\[
 \sum_{|k|\le n} \mu_k\Big(|{[\Delta_h^\alpha \tau_{n}]}\widehat {\ \ }(k)|/a_0\Big)^{p_k}=
    \sum_{|k|\le n} \mu_k\Big(|1-\mathrm{e}^{-{\mathrm i}kh}|^\alpha  {|c_{k}|}/{a_0}\Big)^{p_k}
    \]
 \begin{eqnarray}\nonumber
&=&
\sum_{|k|\le n} \mu_k\Big(2^\alpha \Big|\sin ({kh}/2)\Big|^\alpha {|c_{k}|}/{a_0}\Big)^{p_k}
\le    \sum_{|k|\le n} \mu_k\Big( |kh|^\alpha  {|c_{k}|}/{a_0}\Big)^{p_k}
 \\ \label{Trigon_P}  &\le& \sum_{|k|\le n} \mu_k\Big( |nh|^\alpha \frac{|c_{k}|}{a_0}\Big)^{p_k}=
     \sum_{|k|\le n} \mu_k\Big( |c_{k}|/\|\tau_{n}\|_{_{\scriptstyle \mathbf p,\,\mu}}\Big)^{p_k}\le 1.
\end{eqnarray}
Therefore, $\|\Delta_h^{\alpha} \tau_{n}\|_{_{\scriptstyle \mathbf p,\,\mu}}\le |nh|^\alpha \|\tau_{n}\|_{_{\scriptstyle \mathbf p,\,\mu}}$. For an arbitrary $\varepsilon>0$,  we set $\delta:=\delta(\varepsilon)=\Big(\varepsilon/n^\alpha\|\tau_{n}\|_{_{\scriptstyle \mathbf p,\,\mu}}\Big)^{1/\alpha}$. Then for all $|h|<\delta$, we have
$\|\Delta_h^{\alpha} \tau_{n}\|_{_{\scriptstyle \mathbf p,\,\mu}}<\varepsilon$, i.e., relation (\ref{Trigon_Polynomial}) is indeed fulfilled.

Now let $f$ be a  function from ${\mathcal S}_{M}$ and $S_n=S_{n}(f)$ its Fourier sum.  Then for any  $\varepsilon>0$ there exist a number $n_0=n_0(\varepsilon)$ such that
for any  $n>n_0$, we have $\|f-{S}_{n}\|_{_{\scriptstyle \mathbf p,\,\mu}}<{\varepsilon}/ 2^{\{\alpha\}+1}$.
Furthermore, by virtue of  (\ref{Trigon_Polynomial}),   there exist a number  $\delta:=\delta(\varepsilon,n)$ such that $\|\Delta_h^{\alpha} S_{n} \|_{_{\scriptstyle \mathbf p,\,\mu}}<\frac {\varepsilon}2$ when $|h|<\delta$. Then using properties of norm and {\rm (i)}, for  $n>n_0$ we get the following relation which   yields {\rm (v)}:
\[
\|\Delta_h^\alpha f\|_{_{\scriptstyle \mathbf p,\,\mu}}\le
\|\Delta_h^\alpha (f-S_n)\|_{_{\scriptstyle \mathbf p,\,\mu}}+
    \|\Delta_h^\alpha S_n\|_{_{\scriptstyle \mathbf p,\,\mu}}
 \!\!\le 2^{\{\alpha\}}\| f-S_n\|_{_{\scriptstyle \mathbf p,\,\mu}}+
    \|\Delta_h^\alpha S_n\|_{_{\scriptstyle \mathbf p,\,\mu}}\!\!<\varepsilon.
\]
$\hfill\Box$

{\it 8.2. Proof of Lemma \ref{Lemma_2}.}
Property {\rm (iii)}, non-negativity and increasing of the function $\omega_{\alpha}(f,t)_{_{\scriptstyle \mathbf p,\,\mu}}$  follow from the definition of modulus of smoothness. In {\rm (i)}, the convergence to zero for $\delta\to 0+$ follows by
{\rm (v)} of  Lemma \ref{Lemma_1}.  Property  {\rm (v)} is the consequence of Lemma \ref{Lemma_1}  {\rm (i)}.  According to {\rm (i)} and {\rm (iii)} of Lemma~\ref{Lemma_1}, for arbitrary
$0<\alpha\le \beta$, we have
${\|\Delta_h^\alpha f \|_{_{\scriptstyle \mathbf p,\,\mu}}=
\|\Delta_h^{\alpha-\beta} (\Delta_h^\beta f) \|_{_{\scriptstyle \mathbf p,\,\mu}} \le    2^{\alpha-\beta}
\|\Delta_h^\beta f \|_{_{\scriptstyle \mathbf p,\,\mu}}},$
whence passing to the exact upper bound over all $|h|\le \delta$, we obtain {\rm (ii)}.
Property {\rm (iv)} is proved by  the 
usual arguments. In particular, this property  yields the continuity of the function $\omega_1(f,\delta)_{_{\scriptstyle \mathbf p,\,\mu}}$, since for arbitrary $\delta_1>\delta_2>0$,
$\omega _1(f,\delta_1)_{_{\scriptstyle \mathbf p,\,\mu}}-\omega _1(f,\delta_2)_{_{\scriptstyle \mathbf p,\,\mu}}\le
\omega_1(\delta_1-\delta_2)_{_{\scriptstyle \mathbf p,\,\mu}}\to 0$ as $\delta_1-\delta_2\to 0.$

Let us prove the continuity of the function $\omega_\alpha(f,\delta)_{_{\scriptstyle \mathbf p,\,\mu}}$ for  arbitrary $\alpha>0$. Let ${0<\delta_1<\delta_2}$ and $h=h_1+h_2,$ where $0<h_1\le \delta_1$, $0<h_2\le \delta_2-\delta_1.$ Since
$$    \Delta_h^\alpha f(\delta) =\Delta_{h_1}^\alpha f (\delta) +\sum_{j=0}^\infty {\alpha \choose j}
    (-1)^{j} \Delta_{jh_2}^1 f(\delta+jh_1)
$$
and
$$
    \Big\|\sum_{j=0}^\infty {\alpha \choose j}
    (-1)^{j} \Delta_{jh_2}^1 \, f_{jh_1}\Big\|_{_{\scriptstyle \mathbf p,\,\mu}}
$$
\begin{eqnarray}\nonumber
 &=&
    \inf\bigg\{a>0: \sum_{{ k}\in {\mathbb Z}}\mu_k \Big|{\Big[\sum_{j=0}^\infty {\alpha \choose j}
    (-1)^{j} \Delta_{jh_2}^1  f_{jh_1}\Big]}\widehat {\ \ }(k)/a\Big|^{p_k}\le 1\bigg\}
 \\ \nonumber  &\le& \inf\bigg\{a>0: \sum_{{ k}\in {\mathbb Z}}\mu_k
    \Big( 2^{\{\alpha\}}\alpha  |[\Delta_{h_2}^1 f  ]\widehat {\ \ }(k) |/a\Big)^{p_k}\le 1\bigg\}\le
    2^{\{\alpha\}}\alpha\|\Delta_{h_2}^1 f\|_{_{\scriptstyle \mathbf p,\,\mu}},
\end{eqnarray}
then
$
    \|\Delta_h^\alpha f\|_{_{\scriptstyle \mathbf p,\,\mu}}  \le
    \|\Delta_{h_1}^\alpha f\|_{_{\scriptstyle \mathbf p,\,\mu}} +
    2^{\{\alpha\}}\alpha\|\Delta_{h_2}^1 f\|_{_{\scriptstyle \mathbf p,\,\mu}}
$
and
$$
    \omega_\alpha (f,\delta_2)_{_{\scriptstyle \mathbf p,\,\mu}}   \le
    \omega_\alpha (f,\delta_1)_{_{\scriptstyle \mathbf p,\,\mu}} +
    2^{\{\alpha\}}\alpha \,\omega_1 (f, \delta_2-\delta_1)_{_{\scriptstyle \mathbf p,\,\mu}}.
$$
Hence, we obtain the necessary relation:
$$
    \omega_\alpha(f,\delta_2)_{_{\scriptstyle \mathbf p,\,\mu}}
    - \omega_\alpha(f,\delta_1)_{_{\scriptstyle \mathbf p,\,\mu}}\le 2^{\{\alpha\}}\alpha \,
    \omega_1(f, \delta_2-\delta_1)_{_{\scriptstyle \mathbf p,\,\mu}} \to 0, \quad \delta_2-\delta_1\to 0.
$$

If there exists a derivative $f^{(\beta)}\in {\mathcal S}_{_{\scriptstyle  \mathbf p,\,\mu}}$, $0<\beta\le \alpha$, then by virtue of (\ref{difference_Fourier_Coeff}) and (\ref{Fourier coeff}), for arbitrary numbers
$k\in {\mathbb Z}\setminus\{0\}$ and $h\in [0,\delta]$, we have
\begin{eqnarray}\nonumber
 |{[\Delta_h^\alpha f]}\widehat {\ \ }(k)| &=&
    2^\beta |\sin ({kh}/)2|^\beta |1-\mathrm{e}^{-\mathrm{i}kh}|^{\alpha-\beta} |\widehat{f}(k)|
 \\ \nonumber  &\le& \delta^\beta|k|^\beta   |1-\mathrm{e}^{-\mathrm{i}kh}|^{\alpha-\beta}|\widehat{f}(k)|\le \delta^\beta  |{[\Delta_h^{\alpha-\beta} f^{(\beta)}]}\widehat {\ \ }(k)|,
\end{eqnarray}
and therefore property (vi) holds.

If ${\alpha}$ and $p$ are positive integers,  then using the representation
$$
    \Delta_{ph}^\alpha f(x)= \sum_{k_1=0}^{p-1} \ldots \sum_{k_\alpha=0}^{p-1}
    \Delta_h^\alpha f(x-(k_1+k_2+\ldots+k_\alpha) h),
$$
and the relation
$$
    \Big| [\Delta_h^\alpha f(x-(k_1+k_2+\ldots+k_\alpha) h)]\widehat { \ \ } (k) \Big|
$$
\begin{eqnarray}\nonumber
 &=&
    \Big|\frac{1}{2\pi} \int\limits_{-\pi}^\pi \sum\limits_{j=0}^\alpha (-1)^{j} {\alpha \choose j} f_{jh}(x-(k_1+k_2+\ldots+k_\alpha) h) \mathrm{e}^{-\mathrm{i}kx}~{\rm d}x\Big|
 \\ \nonumber  &\le& \Big|\frac{1}{2\pi} \int\limits_{-\pi}^\pi \sum\limits_{j=0}^\alpha (-1)^{j} {\alpha \choose j} f_{jh}(x) \mathrm{e}^{-\mathrm{i}kx}~{\rm d}x\Big|=\Big| [\Delta_h^\alpha f(x)]\widehat { \ \ } (k) \Big|,
\end{eqnarray}
we see that $\| \Delta_{ph}^\alpha f(x) \|_{_{\scriptstyle \mathbf p,\,\mu}}\le p^\alpha \| \Delta_{h}^\alpha f(x) \|_{_{\scriptstyle \mathbf p,\,\mu}}$:
$$
    \inf\Big\{a>0:
    \sum_{k\in {\mathbb Z}}\mu_k \Big(  \Big| \sum_{k_1=0}^{p-1}   \ldots \sum_{k_\alpha=0}^{p-1} [\Delta_h^\alpha f(x -(k_1+ \ldots+k_\alpha) h)]\widehat { \ \ } (k)  \Big| /a   \Big)^{p_k}\le 1 \Big\}
    $$
$$
  \le \inf\Big\{a>0: \sum_{k\in {\mathbb Z}}\mu_k
    \Big( p^\alpha \Big| [\Delta_h^\alpha f(x)]\widehat { \ \ } (k)  \Big| /a   \Big)^{p_k} \le 1 \Big\}
    .
$$
 To prove (viii) it is sufficient to consider the case $\delta< \eta$
(for $\delta \ge \eta$,  property (viii) is obvious). Choosing the number $p$ such that ${\eta/
\delta} \le p < {\eta/
\delta}+1$, by virtue (i) and (vii), we obtain
$$
    \omega_\alpha(f; \eta)_{_{\scriptstyle \mathbf p,\,\mu}} \le \omega_\alpha(f; p \delta)_{_{\scriptstyle \mathbf p,\,\mu}} \le p^\alpha \omega_\alpha(f;  \delta)_{_{\scriptstyle \mathbf p,\,\mu}} \le
    ({\eta/\delta}+1)^{\alpha}   \omega_\alpha(f, \delta)_{_{\scriptstyle \mathbf p,\,\mu}}.\eqno\Box
$$

\subsection{\it Proof of Lemma \ref{Lemma_3}.}
The right-hand side of  (\ref{estim-for-norms})
is obtained from the Young inequality
$$
    a b \le \frac{a^{p}}{p}+\frac{b^{q}}{q}, \quad \frac{1}{p}+\frac{1}{q}=1, \quad
    a\ge0,\quad b\ge0,
$$
 as follows (here 
 in the proof, we exclude the trivial case when $f\equiv {\rm const}$)
$$
    \| {f\|^\ast_{_{\scriptstyle \bf p,\,\mu}}/\|f\|_{_{\scriptstyle \bf p,\,\mu}}} =
    \Big\| {f/\|f\|_{_{\scriptstyle \bf p,\,\mu}}} \Big\|^\ast_{_{\scriptstyle \bf p,\,\mu}}=
    \sup \Big\{ \sum_{k \in \mathbb{Z}} {  \mu_k|\lambda_k\widehat{f}(k)}|/
    {\|f\|_{_{\scriptstyle \bf p,\,\mu}}} : \  \lambda\in \Lambda\Big\}
$$
$$
    \le \sup \Big\{\sum\limits_{k \in \mathbb{Z}}
    \mu_k\Big(   \frac{|\widehat{f}(k)/\|f\|_{_{\scriptstyle \bf p,\,\mu}}|^{p_k}}{p_k}+
     \frac{|\lambda_k|^{q_k}}{q_k}\Big): \ \lambda\in \Lambda\Big\}
$$
$$
     \le \sup \Big\{\sum\limits_{k \in \mathbb{Z}}
    \mu_k\Big(   \Big|\widehat{f}(k)/\|f\|_{_{\scriptstyle \bf p,\,\mu}}\Big|^{p_k}+
     |\lambda_k|^{q_k}\Big): \ \lambda\in \Lambda\Big\} \le 2.
$$

To prove the left-hand side of 
(\ref{estim-for-norms}), let us show that for any function
${f \in {\mathcal S}_{_{\scriptstyle  \bf p,\,\mu}}}$, from the inequality $\|f\|^\ast_{_{\scriptstyle \bf p,\,\mu}} \le 1$, it follows that $\sum_{k \in \mathbb{Z}}\mu_k |\widehat{f}(k)|^{p_k}\le 1$. Indeed, assume that $\sum_{k \in \mathbb{Z}} \mu_k|\widehat{f}(k)|^{p_k}>1$. Then take a number $\rho > 1$  such that $ \sum_{k \in \mathbb{Z}}\mu_k |{\widehat{f}(k)/ \rho} |^{p_k}=1$ and consider the sequence $\tilde{\lambda}=\{\tilde{\lambda}_k\}_{k\in \mathbb{Z}}$ defined by the equalities
$\tilde{\lambda}_k= (|\widehat{f}(k)|/\rho)^{p_k-1}$
for $k\in {\mathbb Z}$. We have
$$
    \sum\limits_{k \in \mathbb{Z}}\mu_k |\tilde{\lambda}_k|^{q_k}=
    \sum\limits_{k \in \mathbb{Z}}\mu_k \Big|{\widehat{f}(k)/ \rho}\Big|^{(p_k-1)q_k}=
    \sum\limits_{k \in \mathbb{Z}}\mu_k \Big|{\widehat{f}(k)/ \rho} \Big|^{p_k}= 1
$$
that is, $\tilde{\lambda}\in \Lambda({\bf p},\mu)$.  However, by the  definition (\ref{def-Orlicz-norm}) of the Orlicz norm, we get
$$
    \|f\|^\ast_{_{\scriptstyle \bf p,\,\mu}} \ge \sum\limits_{k \in \mathbb{Z}}\mu_k\tilde{\lambda}_k |\widehat{f}(k)|=  \rho \sum\limits_{k \in \mathbb{Z}}\mu_k \Big|{\widehat{f}(k)/
    \rho}\Big|^{p_k}=\rho>1,
$$
which is a contradiction. Hence, for any function $f \in {\mathcal S}_{_{\scriptstyle  \bf p,\,\mu}}$,
the inequality $\|f\|^\ast_{_{\scriptstyle \bf p,\,\mu}} \le 1$ yields the inequality $\sum_{k \in \mathbb{Z}} \mu_k|\widehat{f}(k)|^{p_k}\le 1$.

Since
$
    \Big\| {f}/ {\|f\|^\ast_{_{\scriptstyle \bf p,\,\mu}}} \Big\|^\ast_{_{\scriptstyle \bf p,\,\mu}}=1,
$
then
$
     \sum\limits_{k \in \mathbb{Z}}\mu_k
    \Big|{\widehat{f}(k)}/{\|f\|^\ast_{_{\scriptstyle \bf p,\,\mu}}}\Big|^{p_k} \le 1
$
and therefore, $\|f\|_{_{\scriptstyle \bf p,\,\mu}}\le \|f\|^\ast_{_{\scriptstyle \bf p,\,\mu}}$. 
$\hfill\Box$

{\it Proof of Lemma \ref{Lemma_4}.}Since for any polynomial of the form $\tau_{n}(x){=}\sum_{|k|\le n}  c_{k}\mathrm{e}^{\mathrm{i}kx}$ we have
$\|\tau_{n}^{(\alpha)}\|_{_{\scriptstyle \bf p,\,\mu}}=\inf\{a>0: \sum_{|k|\le n}\mu_k(|k|^\alpha |c_{k}|/a)^{p_k}\le 1\}$, then similarly to (\ref{Trigon_P}), we obtain
\[
    \sum_{|k|\le n} \mu_k\Big(|{[\Delta_h^\alpha \tau_{n}]}\widehat {\ \ }(k)|/a_1\Big)^{p_k}\le
     \sum_{|k|\le n} \mu_k\Big( |kh|^\alpha |c_{k}|/a_1\Big)^{p_k}
     \]
     \[
     \le
     \sum_{|k|\le n} \mu_k
     \Big(|k|^\alpha |c_{k}|/\|\tau_{n}^{(\alpha)}\|_{_{\scriptstyle \bf p,\,\mu}}\Big)^{p_k}\le 1,
\]
when $a_1:=|h|^\alpha \|\tau_{n}^{(\alpha)}\|_{_{\scriptstyle \bf p,\,\mu}}$. Therefore,  $\|\Delta_h^{\alpha} \tau_{n}\|_{_{\scriptstyle \bf p,\,\mu}} \le |h|^\alpha \|\tau_{n}^{(\alpha)}\|_{_{\scriptstyle \bf p,\,\mu}}$.

In (\ref{Bermstain-inequl-gener}), the first inequality  is trivial in the cases where $h=0$ or $|h|=2\pi/n$. So, now let $0<|h|<2\pi/n$. Since
\[
    \|\Delta_h^{\alpha} \tau_{n}\|_{_{\scriptstyle \bf p,\,\mu}}=
    \inf\Big\{a>0: \sum_{|k|\le n} \mu_k
    \Big(2^\alpha \Big|\sin \frac {kh}2\Big|^\alpha |c_{k}|/a\Big)^{p_k}\le 1\Big\}
\]
and  the function $t/\sin t$ increase on $(0,\pi)$, then for  $a_2:=\Big|\frac{n/2}{\sin (nh/2)}\Big|^\alpha \|\Delta_h^{\alpha} \tau_{n}\|_{_{\scriptstyle \bf p,\,\mu}}$ we get
\begin{eqnarray}\nonumber
\sum_{|k|\le n} \mu_k (|k|^\alpha |c_{k}|/{a_2})^{p_k} &=&
    \sum_{|k|\le n} \mu_k \Big(\Big|\frac{kh/2}{\sin (kh/2)}\Big|^\alpha
    \Big|\frac{\sin (kh/2)}{h/2}\Big|^\alpha |c_{k}|/{a_2}\Big)^{p_k}
 \\ \nonumber  &\le&\sum_{|k|\le n} \mu_k \Big(\Big|\frac{nh/2}{\sin (nh/2)}\Big|^\alpha
    \Big|\frac{\sin (kh/2)}{h/2}\Big|^\alpha |c_{k}|/{a_2}\Big)^{p_k}
 \\ \nonumber  &=& \sum_{|k|\le n} \mu_k \Big(2^\alpha \Big|\sin \frac {kh}2\Big|^\alpha |c_{k}|/\|\Delta_h^{\alpha} \tau_{n}\|_{_{\scriptstyle \bf p,\,\mu}}\Big)^{p_k}\le 1.
\end{eqnarray}
Thus, the first inequality in (\ref{Bermstain-inequl-gener}) also holds. $\hfill\Box$

\end{document}